\documentclass[letterpaper, 10pt, conference]{ieeeconf}

\usepackage{algorithmic}
\usepackage{algorithm}
\usepackage{amsmath}
\usepackage{amssymb}
\usepackage[T1]{fontenc}
\usepackage[latin1]{inputenc} 
\usepackage{color}

\newcommand{\MORPH}{\mathcal{S}}
\newcommand{\QNUM}{D}

\newcommand{\HYBINP}{\mathcal{U}}

\newcommand{\DNUM}{\mathcal{D}}

\newcommand{\LPV}{ALPV}
\newcommand{\BLPV}{ALPV\ }
\newcommand{\SLPV}{ALPVs}
\newcommand{\BSLPV}{ALPVs\ }

\newcommand{\GCR}{\textbf{GCR}}
\newcommand{\SysLPV}[1][{}]{ (\OUTP,m,n^{#1},\mathcal{P},\{(A_{q}^{#1},B_{q}^{#1},C_{q}^{#1})\}_{q=1}^{\QNUM})
}

\bibliographystyle{plain}

\newcommand{\SPAN}{\mathrm{Span}}

\newcommand{\Rank}{\mathrm{rank}}

\newtheorem{Theorem}{Theorem}
\newtheorem{Lemma}{Lemma}

\newtheorem{Assumption}{Assumption}

\newtheorem{Definition}{Definition}

\newtheorem{Notation}{Notation}

\newcommand{\SWS}{\mathfrak{S}}
\newcommand{\SWI}{\mathfrak{I}}

\newcommand{\OUTP}{r}

\begin{document}
\title{Affine LPV systems: realization theory, input-output equations and relationship with linear switched systems}
\date{}
\author{Mih\'aly Petreczky$^\dag$,  
         Guillaume Merc\`ere$^{*}$ and 
        Roland T\'oth$^\ddag$  \\
       $^\dag$Ecole des Mines de Douai, F-59500 Douai, France, \\
      \texttt{mihaly.petreczky@mines-douai.fr} \\
        $^{*}$
        University of Poitiers, 
        Laboratoire d'Informatique et d'Automatique pour les Syst\`emes \\
        B.P. 633 86022 Poitiers Cedex, France \\
       \texttt{guillaume.mercere@univ-poitiers.fr} \\
       $^\ddag$ Control Systems Group, 
        Department of Electrical Engineering, Eindhoven University of Technology  \\
       P.O. Box 513, 5600 MB Eindhoven, The Netherlands \\
       \texttt{R.Toth@tue.nl}. 
}

\maketitle
\begin{abstract}
 We formulate a Kalman-style realization theory for
 discrete-time affine LPV systems.
 By an affine LPV system we mean an LPV system whose matrices
 are affine functions of the scheduling parameter. 
 In this paper we characterize those input-output behaviors which
 exactly correspond to affine LPV systems.
 In addition, we characterize minimal affine LPV systems which
 realize a given input-output behavior.
 Furthermore, we explain the relationship between Markov-parameters,
 Hankel-matrices, existence of an affine LPV realization and minimality.
 The results are derived by reducing the problem to the realization problem for linear switched systems.
 In this way, as a secondary contribution, we formally demonstrate
 the close relationship between LPV systems and linear switched systems.
 In addition we show that an input-output map
 has a realization by an affine 
 LPV system if and only if it satisfies
 certain types of input-output equations.
\end{abstract}

\section{Introduction}
 The paper presents a Kalman-style realization theory for
 discrete-time affine LPV systems.
 An affine LPV system (\emph{abbreviated by \LPV}) 
 is linear parameter-varying systems whose matrices are affine functions of the scheduling parameters.   
 By the input-output behavior of an \BLPV  we will mean 
 the input-output map induced by the zero initial state.
 The paper aims at answering the following
 questions.
 \begin{itemize}
 \item How can we characterize those input-output maps which can be
       described \BSLPV ? What is the role of
       Hankel-matrices in this characterization ?

 \item What can be said about minimal \BSLPV realizing
       the given input-output map ? What is the relationship
       between minimal \SLPV,
       and reachability and observability of such systems ?
       Are all minimal \BLPV realizations of the 
       same input-output map isomorphic ?

 \item How can we characterize the input-output equations solutions
       of which correspond to input-output maps of
       \BSLPV ?
 \end{itemize}
 In this paper we will show the following.
 \begin{itemize}
 \item
  We prove that reachability and observability of
   \BSLPV is equivalent to minimality and that minimal realizations of the same input-output map
   are isomorphic.   Note that isomorphism in this setting means
   a linear state-space transformation which does not depend on the
   scheduling parameter. 

   We also show that any \BLPV can be transformed
   into a minimal one while preserving its input-output map.
   
   In addition, we characterize reachability and
   observability in terms of rank conditions  for
   extended reachability and observability matrices.
 
 \item
   We define the 
  Markov-parameters as functions of the input-output map.
    We then show that
   the Hankel-matrix constructed from the Markov-parameters
   has a finite rank if and only   if the corresponding
   input-output map has a realization by an \LPV.
   We show that the Kalman-Ho 
   algorithm of \cite{RolandAbbas} can be used to
   compute an \BLPV realization from the Hankel-matrix, and
   we provide a bound on the size of the Hankel sub-matrices
   which guarantees correctness of the algorithm.
 
 \item
   We also present a class of input-output equations which
   characterize \BSLPV precisely: an input-output map is a 
   solution of such an input-output equation if and only if
   it admits a realization by an \BLPV.

 \item
   Finally, as a secondary result, we establish a formal
   equivalence between the realization problems for 
   \BSLPV and for linear switched systems.
    The solution of
   the latter problem is known
   \cite{MPLBJH:Real,MP:BigArticlePartI,MP:RealForm} and it is
    equivalent to that of recognizable formal power series and state-affine systems \cite{Reut:Book,Son:Real,MFliessHank}.
  We then use realization theory of linear switched systems to
  derive a Kalman-style realization theory for \SLPV.
 \end{itemize}


   Note that in this paper we consider \BSLPV with a fixed
   initial state. Just as in the linear switched case 
   \cite{MP:RealForm,MP:BigArticlePartI} it is possible
   to extend these results to the case of an arbitrary set of
   initial states.

\textbf{Motivation and novelty}
  To the best of our knowledge, the paper is new.
  Many of the concepts (Hankel-matrix, Markov-parameters, extended reachability/observability matrix, etc.) used in the paper 
 have already appeared before.
  However, what is truly novel in this paper is that it 
  formulates a Kalman-style realization theory for \SLPV, while
  using the existing concepts from the literature.
  In addition, the equivalence between \BLPV realizations and
  input-output equations is also new, to the best of our knowledge.

  A Kalman-like realization theory offers several benefits
  for system identification. It allows the characterization of
  identifiability and equivalence of state-space representations.   
   The latter is important for model validation. 
  Kalman-like realization theory also provides a tool for
  finding identifiable canonical parameterizations and characterizing the manifold structure of systems, including hybrid and nonlinear systems,
  \cite{MP:MTNSSpaces,MR0429197,GW74,RalfPeeters,HanzonBook,Hazewinkel1,MercereBako}.  In turn, this knowledge could be used for
  deriving new parametric identification algorithms, 
  see \cite{RalfPeeters,HanzonBook} for the linear case.
  Realization theory also leads to model reduction techniques, such
  as balanced truncation and moment matching \cite{ModRed1}.
  This is also true
  for linear switched systems \cite{MP:ADHS2012,MP:PartReal} and
  \BSLPV \cite{RolandAbbas}.

   Finally, the paper formulates the precise 
   relationship between the realization
   problems for \BSLPV and linear switched systems.
   While this relationship is part of the folklore, 
   it has not been stated formally yet. 

\textbf{Relationship with existing work}
   The field of identification of LPV systems is a mature one
   with a vast literature and several applications,
   without claiming completeness, we mention
   \cite{VerdultPhd,Verhaegen:Automatica,Verhaegen09,Verhaegen:INJC,Lal11,TothTac1,Toth1,THH09,Kal09,CCS11,CL08,BDG05,BG02,FWV07}.
   As it was mentioned before, many of the concepts 
   used in this paper were published before.
   In particular, the idea of Hankel-matrix appeared in \cite{RolandAbbas,VerdultPhd,Verhaegen:Automatica,Verhaegen09,Verhaegen:INJC}.
 However, \cite{RolandAbbas,VerdultPhd,Verhaegen:Automatica,Verhaegen09,Verhaegen:INJC} focuses on the identification problem, which is related to, but different from the realization problem studied in this paper.
  The Markov-parameters were already described in
  \cite{RolandAbbas,Verhaegen09}.
  In contrast to the existing work, in this paper the 
  Markov-parameters and Hankel-matrix are defined directly for 
  input-output maps, without assuming the existence of a finite dimensional \BLPV realization.  
  In fact, the finite rank of the Hankel-matrix represents the necessary and sufficient condition for the existence of an \BLPV realization.  
 The Kalman-Ho realization algorithm was discussed in \cite{RolandAbbas}, but it was formulated with the assumption that an \BLPV realization
  exists. Moreover, the conditions under which the algorithm 
  yields a true realization of the input-output map were not
  discussed in detail in \cite{RolandAbbas}.
  Extended observability and reachability matrices were presented in \cite{Verhaegen09,RolandAbbas}. However, their system-theoretic interpretation and relationship with minimality were not explored.

  Realization theory of more general linear parameter-varying
  systems was already developed in \cite{Toth1}.
  In \cite{Toth1} the system
  matrices are allowed 
  to depend on the scheduling parameter in a non-linear way.
  Moreover, in \cite{Toth1} no conditions involving the rank of the
  Hankel-matrix were formulated for the existence of a state-space 
  realization.
  Hence, the results of \cite{Toth1}
  do not always imply the ones presented in this paper. 
  The minimality conditions of \cite{Toth1} imply
  those of this paper. However, an \BLPV may be minimal in the sense
  of this paper, and may fail to be minimal in the sense of
  \cite{Toth1}. Intuitively this is not at all surprising, since
  it is conceivable that
  by allowing more complicated dependence on the scheduling parameter
  we can get rid of some states.

  In particular, minimal \BSLPV  in the sense of this paper
  are related by constant state-space isomorphism. This is in contrast
  to \cite{Toth1}, where the isomorphism relating state-space
  representations may depend on the scheduling parameter. 
  Note that a minimal \BSLPV
  in the sense of this paper need not be minimal in the sense of
  \cite{Toth1}. Hence, there might exist several state-space
  isomorphisms between \BSLPV which are minimal in the sense of this
  paper. Some of these isomorphisms might depend on the scheduling
  parameters. However, the results of this paper imply that there
  will be a constant state-space isomorphism.
  This is also consistent with \cite{TothKulcsar}.

  Although realization theory of \BSLPV is quite similar to that
  of linear switched systems, there are important differences.
  In particular, there exist no parallel for linear switched systems of
  the equivalence between realizability and existence of input-output
  equations. In fact,  \BSLPV seem to behave more like 
  state-affine systems \cite{Son:Resp,Son:Real} for which
  an analogous result exists.

  It is well known that there is a correspondence
  between LPVs and LFT representations \cite{VerdultPhd,Toth2}.
  In \cite{ball:1474,Beck1,Beck2} the theory of recognizable
  formal power series was used to develop
  realization theory for LFT representations.
  In this paper we reduce the realization
  problem of \BSLPV to that of for linear switched systems.
  The latter problem can also be solved by using 
  recognizable formal power series \cite{MP:RealForm,MP:BigArticlePartI,MPLBJH:Real}. Hence, there is an analogy
  between our approach and that of \cite{ball:1474,Beck1,Beck2}.
  Note that the transformations between \BSLPV and LFT representations
  involve non-trivial transformations of the system
  matrices. Moreover, the resulting class of LFT representations
  seem to differ from the one in \cite{ball:1474,Beck1,Beck2}.
  For this reason,  
  it is unclear how the results of this paper could be 
  derived directly from
  \cite{ball:1474,Beck1,Beck2} and whether such an approach would
  be simpler than the current one.

\textbf{Outline}
 In \S \ref{sect:switch} we review the definition of \BSLPV and the 
 related system-theoretic concepts. In \S \ref{lpv_real:pf}
 we establish the formal relationship between \BSLPV and 
 linear switched systems. In \S \ref{lpv_real}
 we present a Kalman-style realization theory for \SLPV. Finally,
 in \S \ref{sect:io_eq} we present the input-output equations
 describing the behavior of \SLPV.

\textbf{Notation}
  Denote by $\mathbb{N}$ the set of natural numbers including $0$.
 The notation described below is standard in automata theory, see \cite{AutoEilen}.
 Consider a (possibly infinite) set $X$. 
 Denote by $X^{+}$ the set of finite
 non-empty sequences of elements of $X$, i.e. 
 each $w \in X^{+}$ is
 of the form $w=a_{1}a_{2} \cdots a_{k}$,
  $a_1,a_2,\ldots,a_k \in X$, $k > 0$.
 The length of the sequence $w$  above is denoted by $|w|$. 
 We denote by $wv$ the concatenation of the sequences $w,v \in X^{+}$,
  i.e. if $w=a_1\cdots a_k$ and $v=v_1\cdots v_l$,
  $a_1,\ldots,a_k,v_1,\ldots,v_l \in X$, then 
  $wv=a_1\cdots a_kv_1\cdots v_l$.
We denote by $\epsilon$ the \emph{empty sequence}.
We define
$X^{*}=X^{+} \cup \{\epsilon\}$ as the set of all finite sequences
of elements of $X$, including the empty sequence.
By convention, $|\epsilon|=0$, and the concatenation is extended to
$X^{*}$ as follows: for all $w \in X^{*}$,
$w\epsilon=\epsilon w=w$.
For each $j=1,\ldots,m$, $e_j$ is the $j$th unit vector of $\mathbb{R}^{m}$, i.e.
  $e_j=(\delta_{1,j},\ldots, \delta_{n,j})$,
  $\delta_{i,j}$ is the Kronecker symbol. 
 If $Z$ is a subset of a vector space, then $\SPAN Z$ denotes
 the vector space spanned by the elements of $Z$.

\section{Discrete-time LPV systems}
\label{sect:switch}
 In this section we present the formal definition of \BSLPV along with a number of relevant system-theoretic concepts for \SLPV. 
 \begin{Definition}
  \label{switch:def}
  A discrete-time
  affine linear parameter-varying  system
  (abbreviated by \LPV) is of the form
  \begin{equation}
  \label{lin_switch0}
  \Sigma\left\{
  \begin{array}{lcl}
   x(t+1) &=& \sum_{q=1}^{\QNUM} (A_{q}x(t)+B_{q}u(t))p_q(t)  \\
   y(t)  &=& \sum_{q=1}^{\QNUM} (C_{q}x(t))p_q(t). 
  \end{array}\right.
  \end{equation}
  Here $\mathcal{P} \subseteq \mathbb{R}^{\QNUM}$ 
  is the space of scheduling parameters, 
  $\QNUM$ is a positive integer, 
  $p(t)=(p_1(t),\ldots,p_{\QNUM}(t)) \in \mathcal{P}$ is the
  scheduling signal, $u(t) \in \mathbb{R}^{m}$ is the input,
  $y(t) \in \mathbb{R}^{\OUTP}$ is the output and 
  $A_{q} \in \mathbb{R}^{n \times n}$,
  $B_{q} \in \mathbb{R}^{n \times m}$, $C_q \in \mathbb{R}^{\OUTP \times n}$, $q \in Q=\{1,\ldots,\QNUM\}$ are the system matrices.
  We will use the following short notation.
    \[ \SysLPV \] 
\end{Definition}
\begin{Notation}
 In the sequel, $Q=\{1,\ldots,\QNUM\}$.
\end{Notation}
 The definition above also allows for affine
 dependence on the scheduling parameters.
 To this end, choose $\mathcal{P}$ to be of the 
 form $\mathcal{P}=\{ (p_1,\ldots,p_{\QNUM}) \mid p_1=1, (p_2,\ldots,p_{\QNUM}) \in \hat{\mathcal{P}}\}$ for some set $\hat{\mathcal{P}} \subseteq \mathbb{R}^{\QNUM-1}$.
 Moreover, if 
the affine hull of $\hat{\mathcal{P}}$ equals $\mathbb{R}^{\QNUM-1}$,
then the linear span of $\mathcal{P}$ will be equal to
  $\mathbb{R}^{\QNUM}$. The latter property is important, because in
 the sequel we often use the technical assumption
 that $\mathcal{P}$ contains a basis of $\mathbb{R}^{\QNUM}$.
 
 Note that in our definition the output $y_t$ at time $t$ does not depend on the input
 at time $t$. This restriction is made in order to simplify notation and most of the results
 can be easily extended to include direct dependence of $y_t$ on $u_t$.

 Throughout the section, \emph{$\Sigma$ denotes an \LPV\ of the form \eqref{lin_switch0}.}
 The dynamics of $\Sigma$ is driven by \emph{the inputs $\{u(t)\}_{t=0}^{\infty}$ and the
scheduling parameters $\{p(t)\}_{t=0}^{\infty}$.} The state of the system
at time $t$ is $x(t)$.
 If $\mathcal{P}=\{e_1,\ldots,e_{\QNUM}\}$, where
 $e_i$ denotes the $i$th standard basis vector, $i=1,\ldots,\QNUM$,
 then the \BLPV $\Sigma$ can be viewed as a \emph{linear switched system}
 with the set of discrete modes being equal to $Q=\{1,\ldots,\QNUM\}$. 

In order to enable formal discussion, we define
a number of standard concepts such as input-output maps,
reachability, etc. for \SLPV.
\begin{Notation}[Generalized inputs]
 Denote $\HYBINP=\mathcal{P} \times \mathbb{R}^{m}$.
\end{Notation}
 We denote by  $\HYBINP^{*}$ (resp. $\HYBINP^{+}$) 
 the set of all 
  finite 
 (resp. non-empty and finite)
sequences
 of elements of $\HYBINP$.
 A sequence 
 \begin{equation}
 \label{inp_seq}
 w=(p(0),u(0))\cdots (p(t),u(t)) \in \HYBINP^{+} \mbox{, } t \ge 0 
 \end{equation}
  describes the scenario, when the scheduling parameter $p(i)$ and
  the input $u(i)$ are fed to $\Sigma$ at
 time $i$, for $i=0,\ldots,t$.
   \begin{Definition}[State and output]
     Let $x \in \mathbb{R}^{n}$ be a state of $\Sigma$.
     Define the \emph{input-to-state} map
     $x_{\Sigma,x}:\HYBINP^{+} \rightarrow \mathbb{R}^{n}$ 
     and \emph{input-output map} $y_{\Sigma,x}:\HYBINP^{+} \rightarrow \mathbb{R}^{\OUTP}$ of $\Sigma$ as follows.
     For any $w \in \HYBINP^{+}$ of the form (\ref{inp_seq}),
     define \( x_{\Sigma,x}(w) \) as
      the state $x(t)$ of $\Sigma$
     at time $t$, and define
     $y_{\Sigma,x}(w)$ as the output $y(t)$ of $\Sigma$
     at time $t$, if the initial state $x(0)$ of $\Sigma$ equals $x$,
     and the inputs $\{u(i)\}_{i=0}^t$ and
     the scheduling signal $\{p(i)\}_{i=0}^{t}$ are fed to $\Sigma$.
     Note that for $t=0$, $x_{\Sigma,x}(w)=x$.
   \end{Definition}
   The definition above implies that the potential
   input-output behavior
   of an \BLPV can be formalized as a map
   \begin{equation}
   \label{io_map}
     f:\HYBINP^{+} \rightarrow \mathbb{R}^{\OUTP}. 
   \end{equation}
   The value $f(w)$ for $w$ of the form \eqref{inp_seq} represents the output 
   of the underlying black-box system at time $t$, 
   if the inputs $\{u(i)\}_{i=0}^{t}$ and the
   scheduling parameters
   $\{p(i)\}_{i=0}^t$ are fed to the system.
   This black-box system may or may not admit a description by a \LPV.
   Next, we define
   when an \BLPV describes (realizes) $f$.
   \begin{Definition}[Realization]
   \label{switch_sys:real:def1}
    The \BLPV $\Sigma$ of the form \eqref{lin_switch0} 
    is a \emph{realization} of
    an input-output map $f$ of the form \eqref{io_map}, if
   $f$ equals the input-output map of $\Sigma$ which corresponds to the zero initial state, i.e. $f=y_{\Sigma,0}$.
   The map $y_{\Sigma,0}$ will be referred to as the
  \emph{input-output map of $\Sigma$} and it will be denoted by
  $y_{\Sigma}$.
   \end{Definition}
  Similarly to \cite{MP:BigArticlePartI, MP:RealForm},
  the results of this paper could be extended to
  families of input-output maps and multiple initial states.
   However, in order to keep the notation simple, 
   we deal  only with the case when the initial state is zero.

   \begin{Definition}[Input-output equivalence]
    Two \BSLPV\ $\Sigma_1$ and $\Sigma_2$ are said to be
    \emph{input-output equivalent}, if $y_{\Sigma_1}=y_{\Sigma_2}$.
  \end{Definition}

  \begin{Definition}[Reachability]
  \label{def:reach}
    Let $\Sigma$ be an \BLPV of the form \eqref{lin_switch0}.
    We say that $\Sigma$ is reachable, if the linear span
    of all the states of $\Sigma$ which are reachable from
    the zero initial state yields the whole space $\mathbb{R}^{n}$.
  \end{Definition}
   \begin{Definition}[Observability]
    The \LPV\  $\Sigma$ is called \emph{observable} if
    for any two states $x_1,x_2 \in \mathbb{R}^{n}$,
    \( 
      y_{\Sigma,x_1}=y_{\Sigma,x_2} \) implies $x_1=x_2$.
   \end{Definition}
   That is, observability means that if we pick any two distinct 
   states of the system, then for \textbf{some} input and scheduling signal, the resulting
  outputs will be different.

  Note that the concepts of reachability and observability
  presented above are strongly related to extended 
  controllability and observability matrices from 
  subspace identification of \BSLPV \cite{Verhaegen09}.
  Later on, we will show that the \BLPV is reachable
  if and only if the extended controllability matrix
  is full rank, and the \BLPV is observable if and only
  if the extended observability matrix is full rank.

  Finally, we recall the notion of isomorphism for
  \SLPV.
  \begin{Definition}[\BLPV isomorphism]
  \label{sect:problem_form:lin:morphism}
   Consider a \LPV\  $\Sigma_1$ of the form (\ref{lin_switch0}) and
   a \LPV\  $\Sigma_2$ of the form 
   \[ \Sigma_{2}=\SysLPV[a] \]
    with $n_a=n$.
    A nonsingular matrix
    $\MORPH \in \mathbb{R}^{n \times n}$
    is said to be an \emph{\LPV\  isomorphism}
    from $\Sigma_{1}$ to $\Sigma_{2}$,  if
   \begin{equation*}
     \forall q=1,\ldots,\QNUM: 
    A^{a}_{q}\MORPH=\MORPH A_{q}\mbox{,\ \ }  B_{q}^{a}=\MORPH B_{q}
    \mbox{,\ \ }
    C_{q}^{a}\MORPH =C_{q}.
  \end{equation*}
  \end{Definition}
  Note that in the definition of an \BLPV isomorphism, the state-space
  transformation $\MORPH$ does not depend on the scheduling parameter.
  Finally, below we define what we mean by the dimension
  minimality of a \LPV. 
   \begin{Definition}[Dimension]
   \label{switch_sys:dim:def}
    The dimension of $\Sigma$, denoted by $\dim \Sigma$, is 
    the dimension $n$ of its state-space.
   \end{Definition}
\begin{Definition}[Minimality]
 Let 
 $f$ be an
 input-output map.
 An \BLPV $\Sigma$ is \emph{a minimal realization of $f$}, if
 $\Sigma$ is a realization of $f$, and
 for any \LPV\ 
 $\hat{\Sigma}$ which is a realization 
 of $f$, $\dim \Sigma \le \dim \hat{\Sigma}$.
 We say that $\Sigma$ is \emph{minimal},  if $\Sigma$ is
 a minimal realization of its own input-output map $y_{\Sigma}$. 
\end{Definition}

\section{Relationship between linear switched systems and \SLPV}
\label{lpv_real:pf}
 In this section we establish a formal
 relationship between \BSLPV and linear switched systems.
  We start by stating the following assumption.
\begin{Assumption}
\label{assum1}
 In the rest of the paper, unless stated otherwise,
 we will assume that 
 the linear span of elements of $\mathcal{P}$ equals
 $\mathbb{R}^{\QNUM}$, i.e. $\mathcal{P}$ does not
 belong to any of the proper linear subspaces of 
 $\mathbb{R}^{\QNUM}$.
\end{Assumption}
 Note that the assumption above is not restrictive. Indeed,
 if $\mathcal{P}$ belongs to a $\hat{\QNUM}$ dimensional
 proper linear subspace $\mathcal{X}$ of $\mathbb{R}^{\QNUM}$, then 
 we can define a linear map
 $\mathbf{S}:\mathbb{R}^{\QNUM} \rightarrow \mathbb{R}^{\hat{\QNUM}}$ 
  such that $\mathbf{S}$ is injective on $\mathcal{X}$ 
  and replace the set of scheduling parameters
  by $\hat{\mathcal{P}}=\mathbf{S}(\mathcal{P})$. Since 
  $\mathbf{S}$ is linear, the parameters of the 
  resulting new LPV system will
  depend on the parameters in an affine way.

Next, we introduce the concept of \emph{generalized convolution representation} for input-output maps. This concept will
allow us to concentrate on input-output maps for which there
is a hope that they can be realized by \SLPV.
 \begin{Notation}
 \label{lpv:not1}
  Let $\underline{p}=p(0)\cdots p(t)$
  be a sequence of scheduling parameters and let
  $v=q_0\cdots q_t \in Q^{+}$, $q_0\cdots q_t \in Q$.
  Then $\underline{p}^{v}=p_{q_0}(0)p_{q_1}(1)\cdots p_{q_t}(t)$. 
 \end{Notation}
 \begin{Definition}[Convolution representation]
 \label{sect:io:def1}
  Let $f$ be an input-output map of the form \eqref{io_map}.
  The map $f$ has a \emph{generalized convolution
  representation (abbreviated as \GCR)}, if
  there exists a map
  $S^f:\{v \in Q^{+} \mid |v| > 1\} \rightarrow \mathbb{R}^{\OUTP \times m}$ such that
  for each $w \in \HYBINP^{+}$ of the form \eqref{inp_seq},
  \begin{equation}
  \label{sect:io:def1:eq1}
    \begin{split}
     & f(w)=
      \sum_{k=0}^{t-1} 
        \{\sum_{v \in Q^{+}, |v|=t-k+1} S^f(v)\underline{p}_{k:t}^{v}\}u(k), 
    \end{split}
  \end{equation}
   where
   $\underline{p}_{k:t}=p(k)p(k+1)\cdots p(t)$.
 \end{Definition}
  The convolution representation states that
  $f(w)$ is linear in control input and that 
  it is a homogeneous polynomial of degree one in 
  the scheduling parameters. The values of the
  map $S^f$ play the
  role of the coefficients of this polynomial.
  Note that the concept of \GCR\ above is a special case of
  \emph{impulse response representation (IRR)} in \cite{Toth1}.
  Note that since in the \BSLPV of interest the output at time $t$ does not
  depend on the input at time $t$, the summation  in \eqref{sect:io:def1:eq1}
  goes only up to $t-1$.
  Below we show that $S^f$ is uniquely determined by $f$ and that
  the existence of a \GCR\ implies
  that without loss of generality we can assume that
  $\mathcal{P}=\mathbb{R}^{\QNUM}$.
 \begin{Lemma}
 \label{lpv:lemma1}
  If $f$ has a \GCR, then the map $S^f$ is uniquely
  determined by $f$. Moreover, there exists a unique extension 
  $f_{ext}$ of
  $f$ to $\HYBINP^{+}_{ext}$, where 
  $\HYBINP_{ext}=(\mathbb{R}^{\QNUM} \times \mathbb{R}^m)$,
  such that $f_{ext}$ also admits a \GCR\ and
  $S^f=S^{f_{ext}}$.
 \end{Lemma}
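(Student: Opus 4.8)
The plan is to reduce the whole statement to one linear‑algebra fact, namely that the scheduling monomials occurring in \eqref{sect:io:def1:eq1} are linearly independent as functions of the scheduling sequence; this is the only place where Assumption \ref{assum1} is used. Concretely, I would first prove by induction on $n$ that the real‑valued functions $(p(0),\ldots,p(n)) \mapsto \underline{p}_{0:n}^{v}$, indexed by $v \in Q^{n+1}$, are linearly independent on $\mathcal{P}^{n+1}$. The base case $n=0$ is the statement that the coordinate functionals $p \mapsto p_q$, $q \in Q$, are linearly independent on $\mathcal{P}$: a linear combination of them vanishing on $\mathcal{P}$ vanishes on $\SPAN\mathcal{P}=\mathbb{R}^{\QNUM}$ by Assumption \ref{assum1}, hence is trivial. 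For the inductive step, write each $v\in Q^{n+1}$ as $v=q_0v'$ with $v'\in Q^{n}$, use $\underline{p}_{0:n}^{q_0v'}=p_{q_0}(0)\,\underline{p}_{1:n}^{v'}$, apply the base case in the variable $p(0)$ (with $p(1),\ldots,p(n)$ fixed), and then apply the induction hypothesis in the remaining variables to conclude that all coefficients vanish.

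Given this, uniqueness of $S^f$ follows by ``probing'' $f$. To pin down $S^f(v)$ for $|v|=n+1>1$, feed $f$ the length‑$(n+1)$ sequence $w=(p(0),e_i)(p(1),0)\cdots(p(n),0)$ with $p(0),\ldots,p(n)\in\mathcal{P}$ and $i=1,\ldots,m$. In \eqref{sect:io:def1:eq1} every summand with $k\ge 1$ carries the factor $u(k)=0$, so only the $k=0$ term survives and $f(w)=\bigl(\sum_{v\in Q^{n+1}} S^f(v)\,\underline{p}_{0:n}^{v}\bigr)e_i$. Letting $i$ range over $1,\ldots,m$ recovers the matrix‑valued function $(p(0),\ldots,p(n))\mapsto\sum_{v\in Q^{n+1}} S^f(v)\,\underline{p}_{0:n}^{v}$ from $f$, and applying the independence statement entrywise shows that the coefficients $S^f(v)$, $v\in Q^{n+1}$, are uniquely determined by this function, hence by $f$. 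Since $n$ is arbitrary, $S^f$ is uniquely determined by $f$.

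For the extension, I would simply define $f_{ext}:\HYBINP_{ext}^{+}\to\mathbb{R}^{\OUTP}$ by the right‑hand side of \eqref{sect:io:def1:eq1} with the coefficients $S^f$, now allowing the $p(i)$ to range over all of $\mathbb{R}^{\QNUM}$. This is legitimate because $S^f$ is already defined on the whole index set $\{v\in Q^{+}\mid |v|>1\}$. By construction $f_{ext}$ admits a \GCR\ with $S^{f_{ext}}=S^f$, and $f_{ext}$ restricts to $f$ on $\HYBINP^{+}$ precisely because $f$ satisfies its own \GCR\ with the map $S^f$. Uniqueness of $f_{ext}$ is then immediate: if $g$ is any extension of $f$ to $\HYBINP_{ext}^{+}$ that admits a \GCR, then restricting its defining identity to $w\in\HYBINP^{+}$ exhibits $f$ as having a \GCR\ with coefficient map $S^g$; by the uniqueness just proved, $S^g=S^f$, and hence $g$ is given by the same explicit formula as $f_{ext}$, so $g=f_{ext}$.

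The only genuine work is the inductive independence statement of the first paragraph; everything after it is bookkeeping. The step where Assumption \ref{assum1} is essential is exactly the base case of that induction: without $\SPAN\mathcal{P}=\mathbb{R}^{\QNUM}$ the restrictions of the coordinate functionals to $\mathcal{P}$ could fail to be independent, and then neither $S^f$ nor $f_{ext}$ would be determined. I therefore expect this independence argument to be the main obstacle, though it is a standard tensor‑type induction once the base case is in place.
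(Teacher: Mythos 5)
Your proof is correct and follows essentially the same route as the paper's: both rest on the multilinearity of the right-hand side of \eqref{sect:io:def1:eq1} in the scheduling parameters together with Assumption \ref{assum1}, the only difference being that you establish the linear independence of the monomials $\underline{p}_{0:n}^{v}$ on $\mathcal{P}^{n+1}$ explicitly by induction and recover $S^f$ directly from $f$, whereas the paper first extends $f$ to $f_{ext}$ from its values on a basis and then reads off $S^f(v)u=f_{ext}((e_{q_0},u)(e_{q_1},0)\cdots (e_{q_t},0))$. The mathematical content is identical; your version merely makes explicit the independence argument that the paper leaves implicit.
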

 \begin{proof}[Proof of Lemma \ref{lpv:lemma1}]
  The fact that $f_{ext}$ exists relies on the
  fact that \eqref{sect:io:def1:eq1}
  is defined for any values of $p(0),\ldots,p(t) \in \mathbb{R}^{\QNUM}$, 
 and by noticing that the right-hand side of
 \eqref{sect:io:def1:eq1} is a sum of terms, each of which
  multilinear in $p(i),\ldots,p(t)$, $i=0,\ldots,t$.
  Recall that function $g(z_1,\ldots,z_k)$ is multi-linear,
  if for each $i=1,\ldots,k$, if we fix $z_1,\ldots,z_{i-1}, z_{i+1},\ldots,z_k$ and we vary only $z_i$, then
  $g$ is a linear function of $z_i$.
 Then set $f_{ext}$ as the value of the right-hand side
 of \eqref{sect:io:def1:eq1}.
 If the value of $f(w)$ is known for 
 $p(0),\ldots,p(t)$ where $p(0),\ldots,p(t)$ run 
 through a bases of $\mathbb{R}^{\QNUM}$, then these values
 uniquely determine the value of the right-hand side of
 \eqref{sect:io:def1:eq1}, and thus $f_{ext}$ exists and
 it is unique and $S^{f_{ext}}=S^f$.
 Finally, the uniqueness of $S^f$ follow by
 noticing that $S^f(v)u=f_{ext}((e_{q_0},u)(e_{q_1},0)\cdots (e_{q_t},0))$ for $v=q_0\cdots q_t$, $q_0,\ldots,q_t \in Q$,
  $u \in \mathbb{R}^{m}$.
\end{proof}
 In the sequel, we will restrict attention to input-output maps which admit a \GCR. This is not a strong restriction, since the input-output maps of \BSLPV always admits a \GCR. 
 \begin{Lemma}
 \label{lpv:col0}
 The \BLPV $\Sigma$ of the form \eqref{lin_switch0} 
 is a realization of an input-output map 
 $f$ if and only if $f$ has a  \GCR\ and 
 for all $v=q_0\cdots q_t \in Q^{+}$, $q_0,\ldots,q_t  \in Q$,
 $t > 0$
 \begin{equation} 
 \label{lpv:lemma-1:eq1}
   S^f(v)=C_{q_t}A_{q_{t-1}}A_{q_{t-2}}\cdots A_{q_1}B_{q_0}.
 \end{equation}
  If $t=1$, then $A_{q_{t-1}}A_{q_{t-2}}\cdots A_{q_1}$ is interpreted as the
  identity matrix.
 \end{Lemma}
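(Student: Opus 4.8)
The plan is to unfold the state-space recursion in \eqref{lin_switch0} to obtain an explicit formula for $y_{\Sigma,0}(w)$ as a function of the input sequence, and then compare it termwise with the \GCR\ formula \eqref{sect:io:def1:eq1}. For the forward direction, I would first establish by induction on $t$ that, starting from the zero state, $x_{\Sigma,0}\big((p(0),u(0))\cdots(p(t),u(t))\big)$ equals a sum over words $v=q_0\cdots q_{t}\in Q^{+}$ of length $t+1$ of terms $A_{q_{t}}\cdots A_{q_{1}}B_{q_0}\,\underline{p}_{0:t}^{v}\,u(0)$ plus the analogous contributions of $u(1),\dots,u(t-1)$; concretely, $x(t+1)=\sum_{k=0}^{t}\sum_{|v|=t-k+1}A_{q_{t-k}}\cdots A_{q_1}B_{q_0}\,\underline{p}_{k:t}^{v}\,u(k)$ where $v=q_0\cdots q_{t-k}$ (with the convention that an empty product of $A$'s is the identity). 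The inductive step is a direct substitution using the bilinearity of $\sum_q(A_q x(t)+B_q u(t))p_q(t)$ in the pair $(x(t),u(t))$ and $p(t)$. Applying the output equation $y(t)=\sum_{q_t} C_{q_t}x(t)p_{q_t}(t)$ to this expression for $x(t)$ then yields
\[
 y_{\Sigma,0}(w)=\sum_{k=0}^{t-1}\Big\{\sum_{v\in Q^{+},\,|v|=t-k+1} C_{q_t}A_{q_{t-1}}\cdots A_{q_1}B_{q_0}\,\underline{p}_{k:t}^{v}\Big\}u(k),
\]
which is exactly \eqref{sect:io:def1:eq1} with $S^f(v)$ given by the right-hand side of \eqref{lpv:lemma-1:eq1}. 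Hence if $\Sigma$ realizes $f$, i.e. $f=y_{\Sigma,0}$, then $f$ admits a \GCR\ and, since by Lemma \ref{lpv:lemma1} the map $S^f$ is uniquely determined by $f$, the identity \eqref{lpv:lemma-1:eq1} must hold.

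For the converse, suppose $f$ has a \GCR\ and that \eqref{lpv:lemma-1:eq1} holds for the matrices of $\Sigma$. By the computation above, $y_{\Sigma,0}$ also has a \GCR\ whose coefficient map is precisely $v\mapsto C_{q_t}A_{q_{t-1}}\cdots A_{q_1}B_{q_0}$, which by assumption equals $S^f$. Two input-output maps with \GCR\ that share the same coefficient map are equal, because \eqref{sect:io:def1:eq1} expresses the value at any $w$ purely in terms of the coefficient map and the data $(p(i),u(i))$; therefore $y_{\Sigma,0}=f$, i.e. $\Sigma$ is a realization of $f$.

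The only genuinely nontrivial part is getting the bookkeeping of indices right in the inductive unfolding — aligning the word $v=q_0\cdots q_{t}$ with the time instants $0,\dots,t$, keeping track of which $q$ multiplies $B$ (the "earliest," $q_0$) and which multiplies $C$ (the "latest," $q_t$), and correctly handling the edge case $t=1$ where the product $A_{q_{t-1}}\cdots A_{q_1}$ is empty and must be read as the identity. I expect this index manipulation, together with justifying that the summation in \eqref{sect:io:def1:eq1} terminates at $t-1$ (because the output does not depend on $u(t)$, reflected in the fact that $x(t)$ only involves $u(0),\dots,u(t-1)$), to be where all the care is needed; everything else is a routine consequence of linearity and of the uniqueness statement in Lemma \ref{lpv:lemma1}.
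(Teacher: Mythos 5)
Your proof is correct. The paper in fact states Lemma \ref{lpv:col0} without any proof (it does not appear in the appendix of technical proofs), and your argument --- unrolling the state recursion from the zero initial state to exhibit $y_{\Sigma,0}$ in the form \eqref{sect:io:def1:eq1} with coefficients $C_{q_t}A_{q_{t-1}}\cdots A_{q_1}B_{q_0}$, then invoking the uniqueness of $S^f$ from Lemma \ref{lpv:lemma1} for the ``only if'' direction and the fact that a \GCR\ determines the map for the ``if'' direction --- is exactly the standard computation the authors are implicitly relying on, with the index bookkeeping handled correctly.
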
 

 Now we are ready to state the relationship between
\BSLPV and linear switched systems.  To this end, 
 we introduce the following notation.
\begin{Notation}[Switched generalized inputs]
 Denote $\mathcal{P}_{sw}=\{e_1,\ldots,e_{\QNUM}\}$ and
 $\HYBINP_{sw}=(\mathcal{P}_{sw} \times \mathbb{R}^{m})$.
\end{Notation}
 Recall that we can view linear switched systems
 as a subclass of \SLPV, such that the space of scheduling parameters equals $\mathcal{P}_{sw}$. 
Potential input-output maps of linear switched
  systems are maps of the form $f:\HYBINP^{+}_{sw} \rightarrow \mathbb{R}^{\OUTP}$ such that $f$ admits a \GCR. 
 Linear switched systems and their input-output maps
 in the sense of \cite{MPLBJH:Real} correspond to
 linear switched systems and their input-output maps in
 the above sense, if one identifies the scheduling parameter
 $e_q$ with the discrete mode $q \in Q$.
  We refer the reader
  to  \cite{MPLBJH:Real} for the notion of realization, minimality,
  observability, span-reachability, isomorphism. 
  Alternatively,
  all these notions are special cases of the corresponding 
  concepts for \SLPV,  
  if one identifies linear switched systems as a 
  subclass of \BSLPV.
  Note that the concept of span-reachability from \cite{MPLBJH:Real} corresponds to the concept of reachability as defined in Definition \ref{def:reach}.

  \begin{Definition}
   For each $f:\HYBINP^{+} \rightarrow \mathbb{R}^{\OUTP}$ admitting a \GCR, define the \emph{associated switched input-output
   map} $\SWI(f):\HYBINP^{+}_{sw} \rightarrow \mathbb{R}^{\OUTP}$
  as follows.
   Let 
  $f_{ext}$ be the extension of $f$ to $\HYBINP_{ext}^{+}$ as
  described in Lemma \ref{lpv:lemma1} and
  define $\SWI(f)$ as the restriction of $f_{ext}$ to 
  $\HYBINP_{sw}^{+} \subseteq \HYBINP^{+}_{ext}$.
  \end{Definition}
  By noticing that $S^f=S^{f_{ext}}=S^{\SWI(f)}$  we can
  in fact conclude that the correspondence between $f$ and
  $\SWI(f)$ is one-to-one.
   Next we will establish a correspondence between
   \BSLPV and linear switched systems.
  \begin{Definition}
   Let $\Sigma$ be a \BLPV of the form \eqref{lin_switch0}.
   Define the \emph{linear switched systems $\SWS(\Sigma)$
   associated with $\Sigma$} as the linear switched system
   $\SWS(\Sigma)=(\OUTP,m,n,\mathcal{P}_{sw},\{(A_q,B_q,C_q)\}_{q=1}^{\QNUM})$.
  \end{Definition}
  The following theorem collects the properties of
  the correspondence between linear switched systems and
  \SLPV.
  \begin{Theorem}
  \label{lpv:theo1}
  \begin{enumerate}
  \item An \BLPV $\Sigma$ is a realization of the input-output map
   $f$, if and only if $\SWS(\Sigma)$ is a realization of
   $\SWI(f)$.
  \item
      For any \BLPV $\Sigma$, $\dim \SWS(\Sigma)=\dim \Sigma$.
  \item
      Two \BSLPV $\Sigma_1$ and $\Sigma_2$ are isomorphic if and only
      if $\SWS(\Sigma_1)$ is isomorphic to
      $\SWS(\Sigma_2)$.
  \item 
      The \BLPV $\Sigma$ is reachable, observable, minimal if and only if $\SWS(\Sigma)$  is respectively reachable, observable, or minimal.
  \end{enumerate} 
  \end{Theorem}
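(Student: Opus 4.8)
The plan is to settle each of the four items by reducing it to machinery already available: Lemma~\ref{lpv:col0}, the bijection $f\leftrightarrow\SWI(f)$, and --- for the reachability/observability clauses, which carry the only substantial work --- a multilinearity argument that, via Assumption~\ref{assum1}, lets us replace the scheduling set $\mathcal{P}$ by $\mathcal{P}_{sw}=\{e_{1},\ldots,e_{\QNUM}\}$ without changing the subspaces that matter. Item~2 is immediate: by construction $\SWS(\Sigma)$ carries the same state space $\mathbb{R}^{n}$ as $\Sigma$. Item~3 is a tautology once the definitions are unwound --- a nonsingular $\MORPH\in\mathbb{R}^{n\times n}$ satisfies $A_{q}^{a}\MORPH=\MORPH A_{q}$, $B_{q}^{a}=\MORPH B_{q}$, $C_{q}^{a}\MORPH=C_{q}$ for all $q$ (the defining relations of an \BLPV isomorphism $\Sigma_{1}\to\Sigma_{2}$) if and only if the identical relations hold for the matrices of $\SWS(\Sigma_{1}),\SWS(\Sigma_{2})$; since $\SWS$ leaves the matrices untouched and linear-switched-system isomorphism is exactly the special case $\mathcal{P}=\mathcal{P}_{sw}$ of \BLPV isomorphism, this is precisely the condition for $\MORPH$ to be a switched-system isomorphism $\SWS(\Sigma_{1})\to\SWS(\Sigma_{2})$; the equal-dimension requirement is consistent by item~2.

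For item~1, note that $\SWI(f)$ is only defined when $f$ admits a \GCR, so assume this; view $\SWS(\Sigma)$ as an \BLPV with scheduling set $\mathcal{P}_{sw}$. By Lemma~\ref{lpv:col0}, $\Sigma$ realizes $f$ iff $f$ has a \GCR and $S^{f}(q_{0}\cdots q_{t})=C_{q_{t}}A_{q_{t-1}}\cdots A_{q_{1}}B_{q_{0}}$ for every word; applying the same lemma to $\SWS(\Sigma)$ and the candidate map $\SWI(f):\HYBINP_{sw}^{+}\to\mathbb{R}^{\OUTP}$, $\SWS(\Sigma)$ realizes $\SWI(f)$ iff $\SWI(f)$ has a \GCR and $S^{\SWI(f)}(q_{0}\cdots q_{t})=C_{q_{t}}A_{q_{t-1}}\cdots A_{q_{1}}B_{q_{0}}$. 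But $\SWI(f)$ automatically has a \GCR (it is the restriction of $f_{ext}$ from Lemma~\ref{lpv:lemma1}), and $S^{\SWI(f)}=S^{f_{ext}}=S^{f}$, so the two Markov-parameter conditions are literally the same system of equations; hence $\Sigma$ realizes $f$ iff $\SWS(\Sigma)$ realizes $\SWI(f)$.

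The reachability and observability parts of item~4 are the crux. Unrolling \eqref{lin_switch0} from $x(0)=0$, the state of $\Sigma$ reached under $(p(0),u(0))\cdots(p(t),u(t))$ is a sum over $k$ of the terms $\sum_{q_{k},\ldots,q_{t-1}\in Q}A_{q_{t-1}}\cdots A_{q_{k+1}}B_{q_{k}}u(k)\,p_{q_{k}}(k)\cdots p_{q_{t-1}}(t-1)$, each multilinear in $(p(k),\ldots,p(t-1))$; hence every reachable state of $\Sigma$ lies in $\mathcal{W}_{R}:=\SPAN\{A_{q_{l}}\cdots A_{q_{1}}B_{q_{0}}e_{j}\mid l\ge 0,\ q_{0},\ldots,q_{l}\in Q,\ 1\le j\le m\}$. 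Conversely, picking a basis $b_{1},\ldots,b_{\QNUM}\in\mathcal{P}$ of $\mathbb{R}^{\QNUM}$ (possible by Assumption~\ref{assum1}) and feeding $\Sigma$ the input $(b_{i_{0}},e_{j})(b_{i_{1}},0)\cdots(b_{i_{l}},0)(b_{1},0)$ produces $\Phi(b_{i_{0}},\ldots,b_{i_{l}})$ for a fixed multilinear map $\Phi$ with $\Phi(e_{q_{0}},\ldots,e_{q_{l}})=A_{q_{l}}\cdots A_{q_{1}}B_{q_{0}}e_{j}$; expanding each $e_{q_{i}}$ in the basis $\{b_{k}\}$ exhibits every generator of $\mathcal{W}_{R}$ as a linear combination of such reachable states, so the span of the reachable set of $\Sigma$ equals $\mathcal{W}_{R}$. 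The identical argument for $\SWS(\Sigma)$ --- with basis $\mathcal{P}_{sw}$, where each generator of $\mathcal{W}_{R}$ is itself a reachable state --- gives the same conclusion with the same $\mathcal{W}_{R}$ (the matrices coincide), so $\Sigma$ is reachable iff $\SWS(\Sigma)$ is. Observability is dual: the free-response output of $\Sigma$ at time $t$ from initial state $x$ under scheduling $p(0)\cdots p(t)$ equals $\sum_{q_{0},\ldots,q_{t}}C_{q_{t}}A_{q_{t-1}}\cdots A_{q_{0}}x\cdot p_{q_{0}}(0)\cdots p_{q_{t}}(t)$, multilinear in $(p(0),\ldots,p(t))$, so by the same invocation of Assumption~\ref{assum1} one gets $y_{\Sigma,x}=y_{\Sigma,0}$ iff $C_{q_{t}}A_{q_{t-1}}\cdots A_{q_{0}}x=0$ for all words, i.e.\ iff $x\in\mathcal{W}_{O}:=\bigcap\{\ker(C_{q_{t}}A_{q_{t-1}}\cdots A_{q_{0}})\mid t\ge 0,\ q_{0},\ldots,q_{t}\in Q\}$; since $y_{\Sigma,x}$ depends affinely on $x$ with an $x$-independent forced part, observability of $\Sigma$ means $\mathcal{W}_{O}=\{0\}$, and the same $\mathcal{W}_{O}$ governs $\SWS(\Sigma)$, which yields the equivalence.

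Finally, minimality. Since $\Sigma$ realizes $y_{\Sigma}$, item~1 gives $y_{\SWS(\Sigma)}=\SWI(y_{\Sigma})$. If $\Sigma$ is minimal but $\SWS(\Sigma)$ is not, choose a linear switched system $\Sigma'$ realizing $\SWI(y_{\Sigma})$ with $\dim\Sigma'<\dim\Sigma$ and re-read its matrices as an \BLPV $\Sigma''$ with the scheduling set $\mathcal{P}$ of $\Sigma$ (admissible by Assumption~\ref{assum1}); then $\SWS(\Sigma'')=\Sigma'$, and item~1 applied to $\Sigma''$ with $f:=y_{\Sigma}$ shows $\Sigma''$ realizes $y_{\Sigma}$ with $\dim\Sigma''<\dim\Sigma$, contradicting minimality of $\Sigma$. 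Conversely, if $\SWS(\Sigma)$ is minimal but some \BLPV $\hat{\Sigma}$ realizes $y_{\Sigma}$ with $\dim\hat{\Sigma}<\dim\Sigma$, then by item~1 $\SWS(\hat{\Sigma})$ realizes $\SWI(y_{\Sigma})=y_{\SWS(\Sigma)}$ with strictly smaller dimension --- again a contradiction. I expect the real difficulty to be the multilinearity/spanning step in the reachability--observability argument, specifically keeping track of which scheduling values are substituted where and invoking Assumption~\ref{assum1} at the right moment to pass between $\mathcal{P}$ and $\mathcal{P}_{sw}$; the remaining items are short deductions from Lemma~\ref{lpv:col0} and the bijection $f\leftrightarrow\SWI(f)$.
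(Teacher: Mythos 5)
Your proposal is correct and follows essentially the same route as the paper: the substantive content is the multilinearity of the input-to-state and state-to-output maps in the scheduling values, which together with Assumption~\ref{assum1} lets you pass between $\mathcal{P}$ and $\mathcal{P}_{sw}$ without changing the reachable span or the indistinguishability kernel. The remaining items (1--3 and minimality), which the paper dismisses as trivial, are handled by you exactly as intended, via Lemma~\ref{lpv:col0}, the identity $S^{f}=S^{\SWI(f)}$, and the fact that $\SWS$ leaves the matrices and state dimension unchanged.
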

  \begin{proof}[Sketch of the proof of Theorem \ref{lpv:theo1}]
   The only non-trivial statement is that $\SWS$ preserves
   reachability and observability. Let $\Sigma$ be an
   \BLPV of the form \eqref{lin_switch0}.
   First we show that $\Sigma$ is reachable 
   if and only if $\SWS(\Sigma)$ is reachable.
   To this end, consider the map input-to-state map
   $x_{\Sigma,0}:\HYBINP^{+} \rightarrow \mathbb{R}^{n}$
   of $\Sigma$. Notice that 
   $x_{\Sigma,0}$ can be extended to act on 
   $\HYBINP^{+}_{ext}$ and that for any
   input $w \in \HYBINP^{+}_{ext}$ of the form
   \eqref{inp_seq}, $x_{\Sigma,0}$ is a sum of terms, each of which is multilinear in 
   $p(0),\ldots,p(t)$. Hence, the linear span of
   the values of $x_{\Sigma,0}(w)$, $w \in \HYBINP_{ext}^{+}$ 
  equals the
   linear span of values of $x_{\Sigma,0}(w)$,
   $w \in (\mathbf{Z} \times \mathbb{R}^{m})^{+}$, where
   $\mathbf{Z}$ is a basis of $\mathbb{R}^{\QNUM}$. Since by
   Assumption \ref{assum1} $\mathcal{P}$ contains such a 
   basis of $\mathbb{R}^{\QNUM}$ and 
   $\mathcal{P}_{sw}$
   is a basis of $\mathbb{R}^{\QNUM}$,
   it follows that the linear span of $x_{\Sigma,0}(w)$,
   $w \in \HYBINP^{+}$ equals the linear span of
   $x_{\Sigma,0}(w)$, $w \in \HYBINP^{+}_{sw}$.
   Finally, notice that $x_{\Sigma,0}(w)=x_{\SWS(\Sigma),0}(w)$
   for all $w \in \HYBINP^{+}_{sw}$.
   Hence, $\Sigma$ is reachable if and only if
   $\SWS(\Sigma)$ is reachable.


   Next, we show that $\Sigma$ is observable if and only if
   $\SWS(\Sigma)$ is observable. To this end, notice
   that $y_{\Sigma,x}$ can be extended to $\HYBINP^{+}_{ext}$
   and that for any $w \in \HYBINP^{+}_{ext}$ of
   the form \eqref{inp_seq}, 
   $y_{\Sigma,x}(w)$ is a sum of terms, each of which is multilinear in $p(0),\ldots,p(t)$.
   Hence, $y_{\Sigma,x_1}$ and $y_{\Sigma,x_2}$ agree on
   $\HYBINP^{+}$, if they agree on any set
   $(\mathbf{Z} \times \mathbb{R}^{m})^{+}$, 
   where $\mathbf{Z}$ is a basis of $\mathbb{R}^{\QNUM}$.
   Since $\mathcal{P}_{sw}$ is a basis of $\mathbb{R}^{\QNUM}$
   and by Assumption \ref{assum1} 
   $\mathcal{P}$ contains a basis of $\mathbb{R}^{\QNUM}$,
   it then follows that $y_{\Sigma,x_1}$ and
   $y_{\Sigma,x_2}$ are equal on $\HYBINP^{+}$ if and only if
   they are equal on $\HYBINP^{+}_{sw}$. 
   Notice that for $i=1,2$,
   $y_{\SWS(\Sigma),x_i}$ coincides with the restriction of
   $y_{\Sigma,x_i}$ to the set $\HYBINP_{sw}^{+}$.
   This then implies
   that $\Sigma$ is observable if and only if $\SWS(\Sigma)$ is observable.
  \end{proof}

\section{Kalman-style realization theory}
\label{lpv_real}
In this section we exploit Section \ref{lpv_real:pf} and
realization theory of linear switched systems 
\cite{MP:RealForm,MP:BigArticlePartI,MP:PartReal,MPLBJH:Real}
to formulate a Kalman-style realization theory for
\SLPV. 

We start with presenting a characterization of
minimality.
\begin{Theorem}[Minimality]
\label{theo:min}
 An \BLPV is minimal, if and only if
 it is reachable and observable. 
 If two minimal \BSLPV are equivalent, then they are isomorphic.
\end{Theorem}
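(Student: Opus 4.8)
The plan is to reduce everything to the corresponding statements for linear switched systems --- minimality $\Leftrightarrow$ span-reachability $+$ observability, and isomorphism of minimal realizations of the same input-output map --- which are available in \cite{MPLBJH:Real,MP:BigArticlePartI,MP:RealForm}, and then transport them back through the correspondence $\Sigma \mapsto \SWS(\Sigma)$, $f \mapsto \SWI(f)$ whose properties are recorded in Theorem \ref{lpv:theo1}.

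For the first assertion, I would take an arbitrary \BLPV $\Sigma$ and chain the equivalences supplied by Theorem \ref{lpv:theo1}, parts 2 and 4: $\Sigma$ is minimal iff $\SWS(\Sigma)$ is minimal, $\Sigma$ is reachable iff $\SWS(\Sigma)$ is reachable (which, as noted after the definition of $\SWI$, coincides with span-reachability of $\SWS(\Sigma)$), and $\Sigma$ is observable iff $\SWS(\Sigma)$ is observable. Invoking the known equivalence for linear switched systems \cite{MPLBJH:Real}, namely that $\SWS(\Sigma)$ is minimal iff it is span-reachable and observable, then closes the loop and yields ``$\Sigma$ minimal iff $\Sigma$ reachable and observable''.

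For the second assertion, I would start from two minimal \BSLPV $\Sigma_1,\Sigma_2$ with $y_{\Sigma_1}=y_{\Sigma_2}=:f$. By Theorem \ref{lpv:theo1}, part 1, both $\SWS(\Sigma_1)$ and $\SWS(\Sigma_2)$ are realizations of $\SWI(f)$, and by part 4 both are minimal. The corresponding uniqueness theorem for linear switched systems \cite{MPLBJH:Real} then gives a linear switched system isomorphism between $\SWS(\Sigma_1)$ and $\SWS(\Sigma_2)$, and Theorem \ref{lpv:theo1}, part 3, upgrades it to an \BLPV isomorphism between $\Sigma_1$ and $\Sigma_2$ --- in particular a constant, scheduling-independent, nonsingular state-space transformation, which is exactly what is claimed.

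The one point that will require a little care --- though it is effectively already subsumed in Theorem \ref{lpv:theo1}, part 4 --- is checking that minimality transfers faithfully, i.e. that $\SWS$ hits enough linear switched systems. Concretely, given any linear switched realization $\Gamma = (\OUTP,m,n,\mathcal{P}_{sw},\{(A_q,B_q,C_q)\}_{q=1}^{\QNUM})$ of $\SWI(f)$, one forms the \BLPV $\Sigma' = (\OUTP,m,n,\mathcal{P},\{(A_q,B_q,C_q)\}_{q=1}^{\QNUM})$ with $\mathcal{P}$ any scheduling set satisfying Assumption \ref{assum1}; then $\SWS(\Sigma')=\Gamma$, so part 1 of Theorem \ref{lpv:theo1} forces $\Sigma'$ to realize $f$, whence $\dim \SWS(\Sigma_1)=\dim\Sigma_1\le\dim\Sigma'=\dim\Gamma$, which is precisely minimality of $\SWS(\Sigma_1)$ among linear switched realizations of $\SWI(f)$. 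Beyond this bookkeeping there is no genuine obstacle: the technical substance has already been spent in proving Theorem \ref{lpv:theo1} and in the realization theory of linear switched systems.
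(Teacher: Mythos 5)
Your proposal is correct and follows exactly the paper's route: the paper proves this theorem by declaring it a direct consequence of Theorem \ref{lpv:theo1} and \cite[Theorem 3]{MPLBJH:Real}, which is precisely the reduction you carry out (your extra check that minimality transfers faithfully is already subsumed in part 4 of Theorem \ref{lpv:theo1}, as you note). No discrepancies.
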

The theorem above is a direct consequence of Theorem \ref{lpv:theo1} and \cite[Theorem 3]{MPLBJH:Real}.

 Similarly to linear switched systems
 \cite{MPLBJH:Real}, one can construct example of an \BLPV
 $\Sigma$ which is minimal (reachable, observable), 
 while none of the linear subsystems
 $(A_q,B_q,C_q)$, $q \in Q$ is minimal (resp. reachable, observable). 

 Next, we present rank conditions for observability and reachability.
To this end, 
recall from \cite{RolandAbbas,Verhaegen09} the definition of
extended reachability and observability matrices for \SLPV.
That is, let $\Sigma$ be of the form \eqref{lin_switch0}.
We define the extended reachability matrices
$\mathcal{R}_i$, $i \in \mathbb{N}$ for $\Sigma$ as follows:
$\mathcal{R}_0=\begin{bmatrix} B_1, & B_2, & \ldots, & B_{\QNUM} \end{bmatrix}$ and for all $i \in \mathbb{N}$, let
\[ \mathcal{R}_{i+1}=\begin{bmatrix} R_i, & A_1\mathcal{R}_i, & A_2\mathcal{R}_i \ldots, A_{\QNUM}\mathcal{R}_i \end{bmatrix}
\]
Similarly, we define the extended observability matrices
$\mathcal{O}_i$ for $\Sigma$ recursively as follows:
$\mathcal{O}_0=\begin{bmatrix} C_1^T, & C_2^T, & \ldots, & C_{\QNUM}^T \end{bmatrix}^T$ and for all $i \in \mathbb{N}$,
\[ \mathcal{O}_{i+1}=\begin{bmatrix} O_i^T, 
    A_1^T\mathcal{O}_{i}^T, & A_2^T\mathcal{O}_{i}^T, & \ldots, & A_{\QNUM}^T\mathcal{O}^T_i \end{bmatrix}^T.
 \]
 Notice that $\mathcal{R}_{n-1}$ equals the 
 reachability matrix of the switched system $\SWS(\Sigma)$
 and
 $\mathcal{O}_{n-1}$ equals the observability matrix of
 $\SWS(\Sigma)$. For the definition of reachability and observability matrices for linear switched systems see \cite{MPLBJH:Real}.
 Hence, Theorem \ref{lpv:theo1} and \cite[Theorem 4]{MPLBJH:Real} 
 yield the following rank conditions.
\begin{Theorem}
\label{sect:real:lemma1}
 The \BLPV $\Sigma$ is reachable if and only if
 $\Rank \mathcal{R}_{n-1}=n$, and $\Sigma$ is observable if 
 and only if $\Rank \mathcal{O}_{n-1}=n$.
\end{Theorem}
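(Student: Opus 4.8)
The plan is to deduce the statement from Theorem~\ref{lpv:theo1}(4) together with the known rank characterization of reachability and observability for linear switched systems, \cite[Theorem 4]{MPLBJH:Real}. By Theorem~\ref{lpv:theo1}(4), $\Sigma$ is reachable (resp. observable) if and only if the associated linear switched system $\SWS(\Sigma)$ is reachable (resp. observable), and $\SWS(\Sigma)$ has dimension $n$ and the same system matrices $(A_q,B_q,C_q)$, $q\in Q$, as $\Sigma$. Hence it suffices to show that $\mathcal{R}_{n-1}$ and $\mathcal{O}_{n-1}$ coincide, up to a reordering of columns (resp. rows) that does not affect the rank, with the reachability and observability matrices of $\SWS(\Sigma)$ in the sense of \cite{MPLBJH:Real}, and then to invoke \cite[Theorem 4]{MPLBJH:Real}.

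First I would unwind the recursion defining $\mathcal{R}_i$: a straightforward induction on $i$ shows that $\IM \mathcal{R}_i=\SPAN\{A_{q_1}A_{q_2}\cdots A_{q_k}B_{q_0}\mid 0\le k\le i,\ q_0,\ldots,q_k\in Q\}$, where for $k=0$ the empty product $A_{q_1}\cdots A_{q_k}$ is read as the identity. The reachable subspace of $\SWS(\Sigma)$ from the zero state is, by the description of the input-to-state map, the span of all vectors $A_{q_1}\cdots A_{q_k}B_{q_0}$ over words $q_0\cdots q_k\in Q^{+}$ of arbitrary length. Thus the one nontrivial point is the span-stabilization bound: the nondecreasing chain $\IM\mathcal{R}_0\subseteq \IM\mathcal{R}_1\subseteq\cdots$ becomes stationary no later than at index $n-1$. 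This is the standard Cayley--Hamilton-type argument: if $\IM\mathcal{R}_{i+1}=\IM\mathcal{R}_i$ for some $i$, then $A_q\,\IM\mathcal{R}_i\subseteq\IM\mathcal{R}_i$ for every $q$, and an easy induction gives $\IM\mathcal{R}_j=\IM\mathcal{R}_i$ for all $j\ge i$; since each strict inclusion increases the dimension by at least one and dimensions are bounded by $n$, the chain is stationary from index $n-1$ on (the case where all $B_q=0$ is trivial). Consequently $\IM\mathcal{R}_{n-1}$ is exactly the reachable subspace of $\SWS(\Sigma)$, so $\mathcal{R}_{n-1}$ is, up to column reordering, the reachability matrix of $\SWS(\Sigma)$; by Theorem~\ref{lpv:theo1}(4) and \cite[Theorem 4]{MPLBJH:Real}, $\Sigma$ is reachable $\iff$ $\SWS(\Sigma)$ is reachable $\iff$ $\Rank\mathcal{R}_{n-1}=n$.

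The observability claim follows by the dual argument: transposing the recursion, $\mathcal{O}_i^{T}$ is assembled from the $C_q^{T}$ and $A_q^{T}$ exactly as $\mathcal{R}_i$ is assembled from the $B_q$ and $A_q$, so $\IM\mathcal{O}_{n-1}^{T}=\SPAN\{A_{q_1}^{T}\cdots A_{q_k}^{T}C_{q_0}^{T}\}$ is stationary from index $n-1$, i.e. $\mathcal{O}_{n-1}$ is, up to row reordering, the observability matrix of $\SWS(\Sigma)$; again Theorem~\ref{lpv:theo1}(4) and \cite[Theorem 4]{MPLBJH:Real} give $\Sigma$ observable $\iff$ $\SWS(\Sigma)$ observable $\iff$ $\Rank\mathcal{O}_{n-1}=n$.

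The only genuine obstacle is the span-stabilization bound, ensuring that truncating at products of length $n-1$ discards no reachable (resp. no unobservable) directions; everything else is bookkeeping, namely matching the block-recursive definitions of $\mathcal{R}_i,\mathcal{O}_i$ with the reachability/observability matrices of \cite{MPLBJH:Real}. If those matrices are already defined there (and in \cite{RolandAbbas}) to be precisely these objects, this step is immediate and the proof collapses to a one-line application of Theorem~\ref{lpv:theo1} and \cite[Theorem 4]{MPLBJH:Real}.
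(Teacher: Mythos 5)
Your proposal is correct and follows essentially the same route as the paper: identify $\mathcal{R}_{n-1}$ and $\mathcal{O}_{n-1}$ with the reachability and observability matrices of the associated switched system $\SWS(\Sigma)$, then combine Theorem~\ref{lpv:theo1} with the rank criterion of \cite[Theorem 4]{MPLBJH:Real}. The paper states this as a one-line consequence, delegating the span-stabilization bound to the cited reference, whereas you spell that Cayley--Hamilton-type argument out explicitly; this is a harmless (and correct) elaboration rather than a different approach.
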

Theorem \ref{sect:real:lemma1} yields 
algorithms for reachability, observability and minimality reduction of \SLPV. These algorithms are the same as those for
linear switched systems \cite{MPLBJH:Real}.
 Next, we present the necessary and sufficient conditions for
 the existence of a \LPV\ realization  for an input-output map.
 To this end, we need the notion of the Hankel-matrix  and
 Markov-parameters of  an input-output map. 
 In the sequel, \emph{$f$ denotes a map of the form \eqref{io_map}, 
 and we assume that $f$ has a \GCR.}
\begin{Definition}[Markov-parameters]
\label{sect:io:def0}
 The  \emph{Markov-parameter} $M^f(v)$ of $f$ indexed by 
 the sequence
 $v \in Q^{*}$ is the following
 $\OUTP\QNUM \times \QNUM m$ matrix 
 \begin{equation}
   \label{main_results:lin:arb:pow2}
     M^f(v) = \begin{bmatrix}
               S^f(1v1), & \cdots, & S^f(\QNUM v1) \\
               S^f(1v2), & \cdots, & S^f(\QNUM v2) \\
             \vdots   & \vdots     & \vdots \\
             S^f(1v\QNUM), & \cdots, & S^f(\QNUM v \QNUM)
             \end{bmatrix}.
\end{equation}
\end{Definition}
 That is, $M^{f}(v)$ can be viewed as a $\QNUM \times \QNUM$
 block matrix, such that the
 $(i,j)$th entry of $M^f(v)$ equals $S^f(jvi)$, $j,i \in Q$.

  If $f$ has an \BLPV realization $\Sigma$, then
  from Lemma \ref{lpv:col0} it follows that
  $M^f(v)$ can be expressed as product of matrices of $\Sigma$:
  if $\Sigma$ is as in \eqref{lin_switch0}, then
  $M^f(\epsilon)=\widetilde{C}\widetilde{B}$ and
  for all $v=q_1,\ldots,q_k \in Q$, $k > 0$,
  \begin{equation}
  \label{sect:io:lemma1:eq1}
	\begin{split}
       M^f(v)=\widetilde{C}A_{q_k}A_{q_{k-1}}\cdots A_{q_1}\widetilde{B},
      \end{split}
  \end{equation}
  where $\widetilde{C}=\begin{bmatrix} C_1^T, & & \ldots, & C_{\QNUM}^T \end{bmatrix}^T$,
  $\widetilde{B}=\begin{bmatrix} B_1, & \ldots, B_{\QNUM} \end{bmatrix}$.

Note that the values of the map $S^f$, and hence
the Markov-parameters $\{M^f(v)\}_{v \in Q^{*}}$ can be
obtained from the values of $f$.
A naive way to compute $S^f$ is
to compute the derivatives of $f$ with respect to the
scheduling parameter. 
It is easy to see that the Markov-parameters $f$ and 
$\SWI(f)$ coincide, i.e. $M^f(v)=M^{\SWI(f)}(v)$, $v \in Q^{*}$.  Moreover, when applied to linear switched systems,
the Markov-parameters from 
Definition \ref{sect:io:def0} coincide
with the ones in
\cite[Definition 12]{MPLBJH:Real}.

Note that the definition of Markov-parameters does not assume the
existence of an \BLPV realization of $f$.
In fact, even if $f$ does not admit a finite dimensional 
state realization,
its Markov-parameters remain well-defined. 
The reason for this choice is that we want to use the Markov-parameters
to characterize the existence of a  
finite dimensional \BLPV realization of $f$.
This will be achieved by constructing a Hankel-matrix from the
Markov-parameters and by proving that $f$ has an \BLPV
realization if and only if the rank of that Hankel-matrix is finite.
Of course, for this to make sense, we have to define the Markov-parameters and the Hankel-matrix as objects which are well-defined even in
the absence of a finite dimensional state-space representation.

 In order to define the Hankel-matrix of $f$, 
 we will introduce a lexicographic
 ordering on the set $Q^{*}$.
\begin{Definition}[Lexicographic ordering]
\label{rem:lex}
 Recall that $Q=\{1,\ldots,\QNUM\}$. We define a lexicographic
 ordering $\prec$ on $Q^{*}$ as follows.
 For any $v,s \in Q^{*}$,
 $v \prec s$ holds if either
 \textbf{(a)}
 $|v| < |s|$, or 
 \textbf{(b)} $0 < |v|=|s|=k$, $v \ne s$ and the following
 holds: 
 $v=q_1\cdots q_k$, $s=s_1\cdots s_k$, $q_1,\ldots,q_k, s_1,\ldots,s_k \in Q$, and
  for some $l \in \{1,\ldots, k\}$,
 $q_l < s_l$ with the usual ordering of integers and
 $q_i=s_i$ for $i=1,\ldots, l-1$.
 Note 
 that $\prec$ is a complete ordering and
 \begin{equation}
 \label{rem:lex:eq1}
  Q^{*}=\{v_1,v_2,\ldots \} 
 \end{equation}
 with $v_1 \prec v_2 \prec \ldots $.
 Note that $v_1 =\epsilon$ and for all $i \in \mathbb{N}$, $q \in Q$,
 $v_i \prec v_iq$.
\end{Definition}
\begin{Definition}[Hankel-matrix] 
\label{main_result:lin:hank:arb:def}
 Define the Hankel-matrix $H_f$ of $f$ as the following
 infinite matrix
 \[ H_f = \begin{bmatrix}
     M^f(v_1v_1), & M^f(v_2v_1), & \cdots, & M^f(v_kv_1), & \cdots \\
     M^f(v_1v_2), & M^f(v_2v_2), & \cdots, & M^f(v_{k}v_2), & \cdots \\
     M^f(v_1v_3)
     & M^f(v_2v_3), & \cdots, & M^f(v_{k}v_3), & \cdots \\
     \vdots   
      & \vdots   & \cdots & \vdots & \cdots 
   \end{bmatrix},
 \]
i.e. the $\OUTP\QNUM \times m\QNUM$ block of $H_f$ 
in the block row $i$ and block column $j$
equals the Markov-parameter $M^f(s)$, where the word $s=v_jv_i \in Q^{*}$
is the concatenation of the words $v_j$ and $v_i$ from 
\eqref{rem:lex:eq1}.
\end{Definition}
 Note that $H^{f}=H^{\SWI(f)}$ and the definition of
 the Hankel-matrix coincides with the one for linear switched systems
 \cite[Definition 13]{MPLBJH:Real}. 
  \begin{Theorem}[Main result on existence]
  \label{sect:real:theo2}
      The map $f$ has a realization by  an \LPV\  if and
      only if $f$ has a \GCR\ 
      and 
     $\Rank H_{f} < +\infty$.
     Any minimal \LPV\ 
     realization of $f$ has dimension equal to $\Rank H_f$.
\end{Theorem}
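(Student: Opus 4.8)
The plan is to reduce everything to the corresponding result for linear switched systems, using the machinery already assembled in the excerpt. The key observation is that $H_f = H_{\SWI(f)}$ (noted right after Definition~\ref{main_result:lin:hank:arb:def}), and that for linear switched systems the analogous statement is known: by \cite[Theorem 5 (or the relevant existence theorem)]{MPLBJH:Real}, a switched input-output map $g$ admitting a \GCR\ has a realization by a linear switched system if and only if $\Rank H_g < +\infty$, and every minimal such realization has dimension exactly $\Rank H_g$. So the skeleton is: (i) for the "only if" direction, if $f$ has an \BLPV realization $\Sigma$, then $f$ automatically has a \GCR\ (Lemma~\ref{lpv:col0}), and by Theorem~\ref{lpv:theo1}(1) $\SWS(\Sigma)$ realizes $\SWI(f)$; applying the switched-systems existence theorem to $\SWI(f)$ gives $\Rank H_{\SWI(f)} < +\infty$, and since $H_f = H_{\SWI(f)}$ we get $\Rank H_f < +\infty$. (ii) For the "if" direction, assume $f$ has a \GCR\ and $\Rank H_f < +\infty$. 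Then $\Rank H_{\SWI(f)} < +\infty$, so by the switched-systems theorem $\SWI(f)$ admits a linear switched realization, say $\Sigma_{sw}$. View $\Sigma_{sw}$ as an \BLPV\ $\Sigma$ over the scheduling set $\mathcal{P}_{sw}$, and then note that the \emph{same} system matrices define an \BLPV\ over the larger scheduling set $\mathcal{P}$ (or $\mathbb{R}^{\QNUM}$); I claim this \BLPV\ realizes $f$. This is where one uses that $f$ and $\SWI(f)$ determine each other: by Lemma~\ref{lpv:col0} the map $S^{f}$ of a realization is given by products $C_{q_t}A_{q_{t-1}}\cdots A_{q_1}B_{q_0}$, and $\Sigma_{sw}$ being a realization of $\SWI(f)$ forces exactly these identities (again via Lemma~\ref{lpv:col0} applied to the switched side, together with $S^f = S^{\SWI(f)}$ observed before the statement of Definition~\ref{sect:io:def0}'s discussion); hence $\Sigma$ realizes $f$.

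For the dimension claim, I would argue as follows. Let $\Sigma$ be any minimal \BLPV\ realization of $f$; such a realization exists by part (ii) above together with the minimality-reduction procedure implicit in Theorems~\ref{theo:min} and~\ref{sect:real:lemma1} (one can always pass to a reachable and observable, hence minimal, sub-realization). By Theorem~\ref{lpv:theo1}(4), $\SWS(\Sigma)$ is then a minimal linear switched realization of $\SWI(f)$, and by Theorem~\ref{lpv:theo1}(2), $\dim\Sigma = \dim\SWS(\Sigma)$. By the switched-systems dimension statement (\cite[Theorem 5]{MPLBJH:Real}), $\dim\SWS(\Sigma) = \Rank H_{\SWI(f)} = \Rank H_f$. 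This gives $\dim\Sigma = \Rank H_f$ for every minimal \BLPV\ realization. Combined with the fact (Theorem~\ref{theo:min}) that minimal realizations are exactly the reachable-and-observable ones and are unique up to isomorphism, this pins down the minimal dimension.

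The main obstacle is the step in (ii) where one must verify that a linear switched realization of $\SWI(f)$, reinterpreted as an \BLPV\ over the full scheduling set, still realizes the \emph{original} $f$ rather than merely its switched restriction. The content here is that an input-output map admitting a \GCR\ is completely determined by its Markov parameters (equivalently by $S^f$), because \eqref{sect:io:def1:eq1} expresses $f(w)$ as a fixed multilinear expression in the scheduling values with coefficients drawn from $S^f$; since $S^f = S^{\SWI(f)}$ and $\SWI(f)$ is realized with these matrices, the uniqueness part of Lemma~\ref{lpv:lemma1} (the extension $f_{ext}$ is unique) forces the \BLPV\ built from those matrices to produce exactly $f$ on $\HYBINP^{+}$. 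So the argument is really a bookkeeping exercise in chaining Lemma~\ref{lpv:lemma1}, Lemma~\ref{lpv:col0}, and Theorem~\ref{lpv:theo1}, with no genuinely new estimates required; the only care needed is to invoke Assumption~\ref{assum1} so that $\mathcal{P}$ contains a basis of $\mathbb{R}^{\QNUM}$, which is what makes the passage between $f$ and $\SWI(f)$ lossless.
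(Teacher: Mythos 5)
Your proposal is correct and follows essentially the same route as the paper, which proves this theorem in one line as a direct consequence of Theorem \ref{lpv:theo1} and the corresponding existence result for linear switched systems in \cite[Theorem 5]{MPLBJH:Real}; you have merely spelled out the bookkeeping (the identities $S^f=S^{\SWI(f)}$ and $H_f=H_{\SWI(f)}$, and the transfer of realizations, minimality and dimension through $\SWS$ and $\SWI$) that the paper leaves implicit. No gaps.
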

The theorem above is a direct consequence of Theorem \ref{lpv:theo1} and \cite[Theorem 5]{MPLBJH:Real}.

 Finally, we prove the correctness of the
 Kalman-Ho-like realization algorithm for \BSLPV
 from \cite{RolandAbbas}. A similar algorithm was formulated
 for linear switched systems in \cite{MP:PartReal,MPLBJH:Real}.
 To this end, we need the following definition.
 For every $L \in \mathbb{N}$, 
  denote by $\mathbf{N}(L)=\sum_{j=0}^{L} \DNUM^j$ the
 number such all the sequences $v \in Q^{*}$ of length at most $L$.
 Due to the properties of lexicographic ordering, it follows that
 $\{v_1,\ldots,v_{\mathbf{N}(L)}\}=\{v \in Q^{*} \mid |v| \le L\}$.
 \begin{Definition}
 Denote by $H_{f,L,M}$ the $\mathbf{N}(L)\OUTP\QNUM \times \mathbf{N}(M)m\QNUM$ upper-left sub-matrix of $H_f$.
 \end{Definition}
 If $f$ is realized by an \BLPV $\Sigma$, 
 then $H_{f,L,M}=\mathcal{O}_{L}\mathcal{R}_M$,
 where $\mathcal{O}_L$ is the $K$th extended observability matrix
 and $\mathcal{R}_M$ is the $M$th extended reachability
 matrix of $\Sigma$. In this case $H_{f,L,M}$ coincides
 with the Hankel-matrix defined in \cite{RolandAbbas}.
 The Kalman-Ho algorithm goes as follows.
 Compute the factorization
 \[ H_{f,L,L+1} = \mathbf{O}\mathbf{R} \]
  such $\mathbf{O} \in \mathbb{R}^{\OUTP\QNUM\mathbf{N}(L) \times n}$,
  $\mathbf{R} \in \mathbb{R}^{n \times m\QNUM\mathbf{N}(L+1)}$
  and $\Rank \mathbf{O}=\Rank \mathbf{R}=n$ for $n=\Rank H_{f,L,L+1}$.
        One way to compute this factorization is by
   SVD decomposition as in \cite{RolandAbbas}, 
  i.e.  if $H_{f,L,L+1}=USV^{T}$ is the SVD
  decomposition of $H_{f,L,L+1}$ where $S$ is the diagonal part,
  then set $\mathbf{O}=US^{1/2}$ and $\mathbf{R}=S^{1/2}V^T$.
  Let $\overline{\mathbf{R}}$ be the matrix formed by the
  first $\mathbf{N}(L)m\QNUM$ columns of $\mathbf{R}$.
  For each $q \in Q$, let $\mathbf{R}_{q}$ be the 
  $n \times \mathbf{N}(L)m\QNUM$ matrix, such that 
  the $j$th $n \times m\QNUM$ block column of $\mathbf{R}_{q}$
  equals to the $k$th  $n \times m\QNUM$ block column of 
  $\mathbf{R}$, where $k$ is such that $v_jq=v_k$. Here
  $v_k$ and $v_j$ are the $j$th and $k$th elements of
  the lexicographic ordering \eqref{rem:lex:eq1}.
  Construct $\Sigma$ of the form (\ref{lin_switch0}) such that $\begin{bmatrix} B_1, & \ldots, & B_{\QNUM}\end{bmatrix}$
 equals the first $m\QNUM$ columns of $\mathbf{R}$,
 $\begin{bmatrix} C_1^T, & C_2^T, & \ldots, & C_{\QNUM}^T \end{bmatrix}^T$
 equals the first $\OUTP\QNUM$ rows of $\mathbf{O}$ and
 $A_q=\mathbf{R}_{q}\overline{\mathbf{R}}^{+}$,
      where $\overline{\mathbf{R}}^{+}$ is the Moore-Penrose pseudoinverse
      of $\overline{\mathbf{R}}$.
  \begin{Theorem}
  \label{part_real_lin:theo1}
  If $\Rank H_{f,L,L}=\Rank H_{f}$, then 
  $\Sigma$ computed by the algorithm above 
  is a minimal realization of $f$.
  The condition $\Rank H_{f,L,L}=\Rank H_{f}$ holds, if
  there exists an \LPV\  realization $\Sigma$ of $f$ such that
  $\dim \Sigma \le L+1$.
\end{Theorem}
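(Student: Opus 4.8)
The plan is to transport the entire statement to linear switched systems via the correspondence $\SWS$ of Definition immediately before Theorem \ref{lpv:theo1} and the map $f\mapsto\SWI(f)$, and then to invoke the partial-realization (Kalman-Ho) result for linear switched systems from \cite{MP:PartReal,MPLBJH:Real}. Two preliminary observations make this possible. First, as already noted after Definition \ref{main_result:lin:hank:arb:def}, the Hankel-matrices and all their finite upper-left sub-matrices of $f$ and of the associated switched input-output map $\SWI(f)$ coincide: $H_f=H_{\SWI(f)}$ and $H_{f,L,M}=H_{\SWI(f),L,M}$ for all $L,M\in\mathbb{N}$. Second, the Kalman-Ho algorithm stated above, when run on $f$, produces an \BLPV $\Sigma$ such that $\SWS(\Sigma)$ is exactly the linear switched system that the Kalman-Ho algorithm of \cite{MP:PartReal,MPLBJH:Real} produces when run on $\SWI(f)$: the factorization $H_{f,L,L+1}=\mathbf{O}\mathbf{R}$, the shifted blocks $\mathbf{R}_q$, and the formulas for $A_q,B_q,C_q$ are literally the same, because the lexicographic enumeration \eqref{rem:lex:eq1} of $Q^{*}$ used here is the one used there. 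This second point is the only place where a genuine, if routine, verification is required.

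With these in hand I would argue as follows. Assume $\Rank H_{f,L,L}=\Rank H_f$. Since $H_{f,L,L}$ is a sub-matrix of $H_{f,L,L+1}$, which is itself a sub-matrix of $H_f$, the three ranks are sandwiched and hence all equal; denote the common value by $n$. In particular $\Rank H_f=n<+\infty$, so by Theorem \ref{sect:real:theo2} $f$ has an \BLPV realization and every minimal one has dimension $n$. Now apply the Kalman-Ho theorem for linear switched systems \cite{MP:PartReal,MPLBJH:Real} to $\SWI(f)$, whose Hankel-matrix satisfies $\Rank H_{\SWI(f),L,L}=\Rank H_{\SWI(f)}$; it follows that $\SWS(\Sigma)$ is a minimal realization of $\SWI(f)$. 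By part (1) of Theorem \ref{lpv:theo1} this gives that $\Sigma$ is a realization of $f$, and by part (4) of Theorem \ref{lpv:theo1} that $\Sigma$ is minimal; since $y_{\Sigma}=f$, $\Sigma$ is a minimal realization of $f$. (One may also simply note $\dim\Sigma=\dim\SWS(\Sigma)=n=\Rank H_f$ by part (2) of Theorem \ref{lpv:theo1} and the last sentence of Theorem \ref{sect:real:theo2}.) This establishes the first assertion.

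For the second assertion, suppose $f$ admits an \BLPV realization of dimension at most $L+1$; by parts (1)--(2) of Theorem \ref{lpv:theo1} its associated linear switched system is a realization of $\SWI(f)$ of dimension at most $L+1$. The sufficient-condition clause of the Kalman-Ho theorem for linear switched systems \cite{MP:PartReal,MPLBJH:Real} then yields $\Rank H_{\SWI(f),L,L}=\Rank H_{\SWI(f)}$, and using $H_{f,L,L}=H_{\SWI(f),L,L}$ and $H_f=H_{\SWI(f)}$ we conclude $\Rank H_{f,L,L}=\Rank H_f$. (Alternatively, one can argue directly inside the \BLPV world: replace the given realization by a minimal one $\Sigma'$, note $\dim\Sigma'=\Rank H_f\le L+1$ by Theorem \ref{sect:real:theo2}, observe that $\Sigma'$ is reachable and observable by Theorem \ref{theo:min} so that $\Rank\mathcal{R}_L=\Rank\mathcal{O}_L=\Rank H_f$ by Theorem \ref{sect:real:lemma1} together with the monotonicity of the ranks of $\mathcal{R}_i,\mathcal{O}_i$ in $i$, and finally use the identity $H_{f,L,L}=\mathcal{O}_L\mathcal{R}_L$ to get $\Rank H_{f,L,L}=\Rank H_f$.)

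I expect the main obstacle to be purely bookkeeping: making precise that the \BLPV Kalman-Ho algorithm above, with its lexicographic indexing and its construction $A_q=\mathbf{R}_q\overline{\mathbf{R}}^{+}$, coincides under $\SWS$ with the linear switched version of \cite{MP:PartReal,MPLBJH:Real}, and that the rank hypothesis there matches the one used here. Once this identification is spelled out, the theorem follows by transporting the switched-system statements through Theorem \ref{lpv:theo1} and Theorem \ref{sect:real:theo2}.
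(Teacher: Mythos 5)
Your proposal is correct and follows exactly the route the paper takes: the paper's entire proof is the one-line remark that the theorem is a direct consequence of Theorem \ref{lpv:theo1} and the Kalman--Ho partial-realization theorem for linear switched systems in \cite{MPLBJH:Real}, i.e. precisely the transport via $\SWS$ and $\SWI(f)$ together with $H_f=H_{\SWI(f)}$ that you describe. Your write-up merely makes explicit the bookkeeping (identification of the algorithm's output with the switched-system algorithm, and the rank sandwich) that the paper leaves implicit.
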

The theorem above is a direct consequence of Theorem \ref{lpv:theo1} and \cite[Theorem 6]{MPLBJH:Real}.

\section{Input-output equations for \BSLPV}
\label{sect:io_eq}
 In this section we use the results of realization theory to
 establish a relationship between \BSLPV
 and input-output equations.
 In the sequel, \emph{$f$ is assumed to be an input-output map
 $f:\HYBINP^{+} \rightarrow \mathbb{R}^{\OUTP}$ and it is assumed that
 $f$ admits a \GCR}.
In order to avoid excessive notation, in this section we
assume that $\OUTP=1$. However, all the results can easily be
extended to several outputs.
\begin{Definition}[Input-output equations]
 \label{lpv:io_def:theo1}
  An \emph{affine polynomial equation} $E(\mathbf{P},\mathbf{Y},\mathbf{U})$ of order $n$ is a
  polynomial
  in variables $\mathbf{P}=\{P_{i,j}\}_{i=0,\ldots,n,j \in Q}$,
  $\mathbf{Y}=\{Y_i\}_{i=0}^{n}$, $\mathbf{U}=\{U_{i,j}\}_{i=1,\ldots,n, j=1,\ldots,m}$ such that
  \begin{equation}
  \label{sect:io_eq:eq1}
  \begin{split}
     E(\mathbf{P},\mathbf{Y},\mathbf{U})= \sum_{j=0}^{n} Q_{j}(\mathbf{P})Y_j+
     \sum_{i=1}^{n} \sum_{j=1}^{m} L_{i,j}(\mathbf{P})U_{i,j}
  \end{split}
  \end{equation}
  where $Q_{0}(\mathbf{P})$, $Q_i(\mathbf{P})$, 
  $L_{i,j}(\mathbf{P})$ are
  polynomials, $i=1,\ldots,n$, $j=1,\ldots,m$ and
  $Q_{0}(\mathbf{P}) \ne 0$.
\end{Definition}
\begin{Definition}
  Assume that $E$ is an affine polynomial
  equation of the form \eqref{sect:io_eq:eq1}.
  Then the input-output map $f$
  is said to \emph{satisfy the equation $E$}, if
  for each $w$ of the form \eqref{inp_seq} wit $t > n$,
  \( E(f,w)=0 \), where $E(f,w)$ denotes the 
   value of $E(\mathbf{P},\mathbf{Y},\mathbf{U})$ with
  the following substitution 
  $P_{i,j}=p_{j}(t-i)$,  $U_{i,l}=u_l(t-i)$
  $Y_i=f((p(0),u(0))\cdots (p(t-i),u(t-i))$ for
  $j \in Q$, $l=1,\ldots,m$, $i=0,\ldots,n$.
\end{Definition}
 \begin{Theorem}
 \label{lpv:io_eq:theo1}
  Assume that the set of scheduling parameters $\mathcal{P}$
  is an open subset of $\mathbb{R}^{\QNUM}$.
  The input-output map $f$ has a realization by an \BLPV
  if and only if $f$ satisfies an affine polynomial
  equation of the form \eqref{sect:io_eq:eq1}.
 \end{Theorem}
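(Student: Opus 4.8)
The plan is to prove both directions by reducing the statement to the corresponding characterization for the Hankel-matrix (Theorem~\ref{sect:real:theo2}), i.e. to show that ``$f$ satisfies an affine polynomial equation of the form \eqref{sect:io_eq:eq1}'' is equivalent to ``$\Rank H_f < +\infty$'' (given that $f$ has a \GCR, which is assumed throughout the section). The key technical device is the identity from Lemma~\ref{lpv:col0}: for $v = q_0 \cdots q_t$, $S^f(v) = C_{q_t} A_{q_{t-1}} \cdots A_{q_1} B_{q_0}$ whenever $\Sigma$ is a realization, together with the \GCR\ expansion \eqref{sect:io:def1:eq1}, which expresses $f(w)$ as a sum over words $v$ of $S^f(v)$ times a monomial in the scheduling parameters times a past input. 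Throughout I will use the extension $f_{ext}$ from Lemma~\ref{lpv:lemma1} so that the scheduling parameters may be treated as free variables ranging over $\mathbb{R}^{\QNUM}$; since $\mathcal{P}$ is assumed open, an affine polynomial equation holds on $\mathcal{P}$ if and only if it holds identically on $\mathbb{R}^{\QNUM}$, so passing to $f_{ext}$ loses nothing.

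\textbf{($\Rightarrow$).} Suppose $f$ has an \BLPV\ realization $\Sigma = \SysLPV$ of dimension $n$. I would write the state recursion $x(t+1) = \sum_{q} (A_q x(t) + B_q u(t)) p_q(t)$ and $y(t) = \sum_q C_q x(t) p_q(t)$, and eliminate the state by the Cayley--Hamilton-type argument: the $n+1$ vectors obtained by reading off $y(t), y(t-1), \ldots, y(t-n)$ ``backwards'' through the recursion, each multiplied by the appropriate scheduling monomials, must be linearly dependent over the field of rational functions in the $P_{i,j}$, since they all lie in (a scheduling-dependent image of) the $n$-dimensional state space. Concretely, expanding $Y_i = f(\cdots(p(t-i),u(t-i)))$ via the \GCR\ and \eqref{lpv:lemma-1:eq1}, each $Y_i$ is a linear combination, with polynomial-in-$\mathbf{P}$ coefficients, of the entries of $x(t-i)$ and of the past inputs $U_{k,l}$; and $x(t-i)$ in turn is a fixed polynomial expression in $x(t-n-1)$ and the inputs $u(t-n), \ldots, u(t-i-1)$. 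Clearing denominators in the resulting rational dependence yields polynomials $Q_j(\mathbf{P})$, $L_{i,j}(\mathbf{P})$ with $Q_0(\mathbf{P}) \not\equiv 0$ (the leading coefficient can be arranged to be a nonzero power of the scheduling monomial multiplying $y(t)$) such that $E(f,w) = 0$ for all $w$ with $t > n$. The cleanest way to organize this is probably to invoke the known analogous result for linear switched systems or state-affine systems and transfer it via $\SWI$, exactly as the other theorems in this section are proved --- but here there is a subtlety (see below), so I would be prepared to give the direct elimination argument.

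\textbf{($\Leftarrow$).} Suppose $f$ satisfies $E$ of the form \eqref{sect:io_eq:eq1} with $Q_0(\mathbf{P}) \not\equiv 0$. I would substitute special test inputs into $E(f,w) = 0$ to extract recursions among the Markov-parameters. Taking $w$ of the form $(e_{j_0}, u)(e_{j_1},0)\cdots(e_{j_{t-1}},0)(p, 0)$ with $p$ free and using that $f_{ext}$ restricted to such inputs computes $S^f(j_0 j_1 \cdots j_{t-1} q)u$ in its $q$-th coordinate, the equation $E(f,w)=0$ becomes, after matching coefficients of the monomials in $p$ and in $u$, a system of linear relations expressing each sufficiently long Markov-parameter $M^f(v q)$ (equivalently each $S^f$ of a long word) as a fixed linear combination of Markov-parameters indexed by strictly shorter suffixes, with constant coefficients read off from $Q_1, \ldots, Q_n$ relative to $Q_0$. (Here one uses that $Q_0(\mathbf{P}) \not\equiv 0$ and $\mathcal{P}$ open to ensure the relevant leading coefficient does not vanish, so the recursion can be solved for the top-order term.) Iterating this recursion, every column of $H_f$ indexed by a word of length $> n$ is a linear combination of columns indexed by words of length $\le n$; hence $\Rank H_f \le \mathbf{N}(n)\,m\QNUM < +\infty$. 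By Theorem~\ref{sect:real:theo2}, $f$ admits an \BLPV\ realization.

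\textbf{Main obstacle.} The delicate point is the handling of the scheduling variables as formal/rational quantities while keeping the coefficients $Q_j, L_{i,j}$ genuine polynomials with $Q_0 \not\equiv 0$: the elimination in the ($\Rightarrow$) direction naturally produces a dependence over the field $\mathbb{R}(\mathbf{P})$, and one must clear denominators carefully to land back in the polynomial ring without accidentally making $Q_0$ identically zero. A secondary subtlety is that, unlike the earlier results in this section, Theorem~\ref{lpv:io_eq:theo1} has \emph{no} clean linear-switched counterpart (the paper itself remarks that switched systems have no analogue of the realizability/input-output-equation equivalence, and that \BSLPV\ behave here more like state-affine systems); so the reduction via $\SWI$ only transports the Hankel-rank condition, and the passage between ``finite Hankel rank'' and ``satisfies an affine polynomial equation'' must be done directly in the LPV setting, exploiting that the scheduling enters multilinearly/polynomially rather than through a finite alphabet. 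Getting the bookkeeping of which monomials in $\mathbf{P}$ appear --- and verifying that the extracted coefficient identities really are the defining relations of a finite-rank Hankel matrix --- is where the real work lies.
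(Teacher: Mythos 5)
Your overall architecture (equate ``satisfies an affine polynomial equation'' with ``$\Rank H_f<\infty$'' and invoke Theorem~\ref{sect:real:theo2}) matches the paper, which routes the argument through the space $\mathcal{W}_f=\SPAN\{f^{\underline{p}}\}$ of zero-input continuations and a linear isomorphism between $\mathcal{W}_f$ and the row span of $H_f$ (Lemmas~\ref{lpv:io_eq:lemma1}, \ref{lpv:io_seq:lemma4}, \ref{lpv:io_seq:lemma5}). Your ($\Rightarrow$) direction is essentially the paper's: from finite-dimensionality ($d=\dim\mathcal{W}_f$) one gets $d+1$ polynomial-coefficient expansions of the continuations that must be linearly dependent over the quotient field $\mathbb{R}(X_1,\ldots,X_{d+1})$, and one clears denominators, taking $Q_0$ to be the top \emph{nonzero} coefficient so that $Q_0\not\equiv 0$. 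That part is sound in outline.

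The genuine gap is in your ($\Leftarrow$) direction. You propose to evaluate the equation at scheduling sequences built from the standard basis vectors $e_q$ and to ``solve the recursion for the top-order term,'' dividing by $Q_0$ evaluated at those points. But $Q_0(\mathbf{P})\not\equiv 0$ as a polynomial does not prevent $Q_0$ from vanishing at every tuple of standard basis vectors (e.g.\ $Q_0=P_{0,1}P_{0,2}$ vanishes whenever $p(t)\in\{e_1,\ldots,e_{\QNUM}\}$), so the claimed column recursion for $H_f$ with ``constant coefficients read off from $Q_1,\ldots,Q_n$ relative to $Q_0$'' need not exist. Openness of $\mathcal{P}$ only guarantees that $Q_0\ne 0$ on a dense open set of scheduling tuples, not at the finitely many discrete points you substitute. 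The paper's proof of Lemma~\ref{lpv:io_seq:lemma4} is built precisely to bridge this: it first shows, using multilinearity of $\underline{p}\mapsto f^{\underline{p}}$, that each $W_{f,k}=\SPAN\{f^{p_1\cdots p_k}\}$ is finite dimensional; it then proves $f^{\underline{p}}\in W_{f,n}$ by induction for $\underline{p}$ in the dense open set $Z$ where the relevant evaluations of $Q_0$ are nonzero; and finally it extends to all $\underline{p}$ by a pointwise-limit argument, using that a finite-dimensional subspace of the function space is closed under pointwise limits (the invertible-matrix-$S$ argument). Only after $\mathcal{W}_f$ is known to be finite dimensional is the Hankel rank bounded, via the isomorphism $\Phi$ of Lemma~\ref{lpv:io_seq:lemma5}. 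Your sketch skips both the density/limit step and the separate finite-dimensionality of $W_{f,n}$, and as written the deduction $\Rank H_f\le\mathbf{N}(n)m\QNUM$ does not go through.
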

  In \cite{Toth1} it was shown that input-output maps of LPV 
  systems
  with a meromorphic dependence on parameters correspond to
  input-output maps which satisfy 
  linear autoregressive equations with
  respect to outputs and inputs. The coefficients of these
  autoregressive equations were meromorphic functions of the
  time-shifted scheduling parameters. Affine polynomial
  input-output equations represent a special case of the
  autoregressive equations of \cite{Toth1}. 
  Theorem \ref{lpv:io_eq:theo1} says that input-output maps
  described by these type of equations (and which, in addition, admit a \GCR) correspond precisely to input-output maps realizable by \SLPV.

 The proof of Theorem \ref{lpv:io_eq:theo1}
 is an adaptation of the proof of the analogous statement for state-affine systems
 \cite{Son:Resp,Son:Real}.
 The proof is divided into several lemmas,
 proofs of which are presented in the appendix.

 \begin{Lemma}
 \label{lpv:io_eq:lemma-1}
  If the interior of $\mathcal{P}$ not empty, then
  $f$ satisfies the input-output
  equation \eqref{sect:io_eq:eq1} if and only if
  its extension $f_{ext}$ from Lemma \ref{lpv:lemma1}
  satisfies \eqref{sect:io_eq:eq1}.
 \end{Lemma}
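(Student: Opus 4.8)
The plan is to fix an affine polynomial equation $E$ of the form \eqref{sect:io_eq:eq1}, of some order $n$, and prove the two implications separately; the backward implication is immediate, and the forward one is where the hypothesis on the interior of $\mathcal{P}$ is used. First I would dispose of the easy direction. Suppose $f_{ext}$ satisfies $E$. Since $\HYBINP = \mathcal{P}\times\mathbb{R}^{m} \subseteq \mathbb{R}^{\QNUM}\times\mathbb{R}^{m} = \HYBINP_{ext}$, every $w \in \HYBINP^{+}$ of the form \eqref{inp_seq} with $t > n$ also lies in $\HYBINP_{ext}^{+}$, and so do all of its prefixes. Because $f$ is by definition the restriction of $f_{ext}$, the numbers substituted into $E$ to form $E(f,w)$ --- namely $P_{i,j}=p_j(t-i)$, $U_{i,l}=u_l(t-i)$, $Y_i=f((p(0),u(0))\cdots(p(t-i),u(t-i)))$ --- coincide with those used to form $E(f_{ext},w)$. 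Hence $E(f,w)=E(f_{ext},w)=0$, so $f$ satisfies $E$.

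For the converse, assume $f$ satisfies $E$. Fix $t>n$ and an arbitrary input sequence $u(0),\ldots,u(t)\in\mathbb{R}^{m}$. The key observation is that the map
\[ (p(0),\ldots,p(t)) \;\longmapsto\; E\big(f_{ext},\,(p(0),u(0))\cdots(p(t),u(t))\big) \]
is a polynomial function on $(\mathbb{R}^{\QNUM})^{t+1}$. Indeed, after the substitution $P_{i,j}=p_j(t-i)$ the coefficients $Q_j(\mathbf{P})$ and $L_{i,l}(\mathbf{P})$ of $E$ become polynomials in the components of $p(t-n),\ldots,p(t)$; the quantities $U_{i,l}=u_l(t-i)$ are constants once the inputs are fixed; and each $Y_j=f_{ext}((p(0),u(0))\cdots(p(t-j),u(t-j)))$ is, by the construction of $f_{ext}$ in the proof of Lemma \ref{lpv:lemma1}, a finite sum of terms each multilinear in $p(0),\ldots,p(t-j)$, and hence a polynomial in the components of $p(0),\ldots,p(t)$. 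Summing the corresponding products gives the claimed polynomial, call it $\Phi$.

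Finally, by hypothesis $E(f,w)=0$ for every $w\in\HYBINP^{+}$ with $|w|=t+1$, and since $f$ agrees with $f_{ext}$ on $\HYBINP^{+}$ this says precisely that $\Phi$ vanishes on $\mathcal{P}^{t+1}$. As $\mathcal{P}$ has nonempty interior, $\mathcal{P}^{t+1}$ contains a nonempty open subset of $(\mathbb{R}^{\QNUM})^{t+1}$, and a polynomial vanishing on a nonempty open subset of a Euclidean space is identically zero. Therefore $\Phi\equiv 0$, i.e. $E(f_{ext},w)=0$ for all $p(0),\ldots,p(t)\in\mathbb{R}^{\QNUM}$. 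Since the input sequence and $t>n$ were arbitrary, $f_{ext}$ satisfies $E$, completing the proof. The only step requiring care is the polynomiality of $\Phi$, which rests entirely on the multilinear-in-the-scheduling-parameters structure of $f_{ext}$ already isolated in Lemma \ref{lpv:lemma1}; past that, the argument is just the standard fact that a polynomial vanishing on an open set is zero, so I anticipate no genuine obstacle.
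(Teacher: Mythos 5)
Your proof is correct and takes essentially the same route as the paper's: for fixed inputs, $E(f_{ext},\cdot)$ is a polynomial in the scheduling parameters (by the multilinearity of $f_{ext}$ established in Lemma \ref{lpv:lemma1}), and a polynomial vanishing on $\mathcal{P}^{t+1}$, which contains a nonempty open set, vanishes identically. You merely spell out the details (including the trivial restriction direction) that the paper's three-line argument leaves implicit.
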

 From Lemma \ref{lpv:io_eq:lemma-1} it follows that
 without loss of generality, 
 we can assume $\mathcal{P}=\mathbb{R}^{\QNUM}$.
 \begin{Assumption}
 In the sequel, \emph{we assume that $\mathcal{P}=\mathbb{R}^{\QNUM}$}.
 \end{Assumption}

 For any
 sequence $\underline{p}=p_1p_2\cdots p_k \in \mathcal{P}^{+}$, 
 $p_1,\ldots,p_k \in \mathcal{P}$, $k > 0$ define the map
 $f^{\underline{p}}:\HYBINP^{+} \rightarrow \mathbb{R}$ 
 as follows:
 \[ \forall w \in \HYBINP^{+}: f^{\underline{p}}(w)=f(w(p_1,0)(p_2,0)\cdots (p_k,0))
 \]
 Recall that $w(p_1,0)\cdots (p_k,0)$ denotes the concatenation
 of the sequence $w$ with the sequence $(p_1,0)\cdots (p_k,0)$.
 Intuitively, $f^{\underline{p}}(w)$ equals the response of $f$, if
 first we feed in the 
 inputs and scheduling parameters prescribed by $w$ and
 then for the last $k$ time steps we feed in the zero input and 
 the scheduling parameters $p_1,\ldots,p_k$.
  
  \begin{Lemma}
  \label{lpv:io_eq:lemma1}
   There exists  an affine polynomial input-output equation
  $E$ of the form \eqref{sect:io_eq:eq1}  such that
  $f$ satisfies $E$, 
  if and only if there exists polynomials
  $Q_i(\mathbf{P})$, $i=0,\ldots,n$ such that $Q_0 \ne 0$,
  and for any $p_1,\ldots,p_{n+1} \in \mathcal{P}$,
  \begin{equation}
  \label{sect:io_eq:eq2}
     \sum_{j=0}^{n} Q_j(p_1,\ldots,p_{n+1})
      f^{p_{1}p_{2}\cdots,p_{n+1-j}}
  \end{equation}
  \end{Lemma}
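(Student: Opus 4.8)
The plan is to use the \GCR\ of $f$ to translate the input-output equation into the ``zero-tail'' form \eqref{sect:io_eq:eq2}. Fix the order $n$. For $w=(p(0),u(0))\cdots(p(t),u(t))$ with $t>n$, write $w_{0:s}=(p(0),u(0))\cdots(p(s),u(s))$, let $w'=w_{0:t-n-1}$ be its length-$(t-n)$ prefix, and set $p_i=p(t-n+i-1)$ for $i=1,\ldots,n+1$, so that $(p_1,\ldots,p_{n+1})=(p(t-n),\ldots,p(t))$ is, up to reversing the order of its arguments, exactly the tuple substituted into $\mathbf{P}$ in the evaluation $E(f,w)$. The key preliminary computation I would carry out is: for each $j=0,\ldots,n$,
\[ f(w_{0:t-j})=f^{p_1\cdots p_{n+1-j}}(w') + \sum_{i=j+1}^{n}\Phi_{j,i}(\mathbf{P})\,u(t-i), \]
where each $\Phi_{j,i}$ is a (matrix-valued) polynomial in $\mathbf{P}$. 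This follows directly from \eqref{sect:io:def1:eq1}: there, $f(w_{0:t-j})$ is a sum over $k=0,\ldots,t-j-1$ of terms linear in a single $u(k)$ whose coefficient $\sum_{|v|=t-j-k+1}S^f(v)\underline{p}_{k:t-j}^{v}$ is a polynomial in the coordinates of $p(k),\ldots,p(t-j)$; putting $u(t-n)=\cdots=u(t-j-1)=0$ leaves precisely $f^{p_1\cdots p_{n+1-j}}(w')$ (the output at time $t-j$ does not see $u(t-j)$), and the terms stripped off are the contributions of $u(t-i)$ for $j+1\le i\le n$, whose coefficients only involve $p(t-i),\ldots,p(t-j)$ and hence are polynomials in $\mathbf{P}$.

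For the ``only if'' direction, assume $f$ satisfies an equation $E$ of the form \eqref{sect:io_eq:eq1} of order $n$, and evaluate $E(f,w)=0$ on those $w$ with $t>n$ and $u(t-n)=\cdots=u(t)=0$. Each term $L_{i,l}(\mathbf{P})U_{i,l}$ then vanishes, and by the preliminary computation each $Y_j=f(w_{0:t-j})$ equals $f^{p_1\cdots p_{n+1-j}}(w')$; hence $\sum_{j=0}^{n}Q_j(\mathbf{P})\,f^{p_1\cdots p_{n+1-j}}(w')=0$. As $w$ runs over all such sequences, $w'$ runs over all of $\HYBINP^{+}$, and since $\mathcal{P}=\mathbb{R}^{\QNUM}$ by our standing assumption, $(p_1,\ldots,p_{n+1})$ runs over $\mathcal{P}^{n+1}$. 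Reindexing the arguments of the $Q_j$ to match the tuple $(p_1,\ldots,p_{n+1})$ — a relabelling that preserves the condition $Q_0\ne0$ — yields exactly \eqref{sect:io_eq:eq2}.

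For the ``if'' direction, assume \eqref{sect:io_eq:eq2} and fix any $w$ with $t>n$. Substituting \eqref{sect:io:def1:eq1} into $D(w):=\sum_{j=0}^{n}Q_j(p_1,\ldots,p_{n+1})\,f(w_{0:t-j})$ writes $D(w)=\sum_{k=0}^{t-1}C_k\,u(k)$ with each $C_k$ a polynomial in the scheduling parameters (not depending on the inputs). Restricting to $u(t-n)=\cdots=u(t)=0$ and combining the preliminary computation with \eqref{sect:io_eq:eq2} gives $\sum_{k=0}^{t-n-1}C_k\,u(k)=0$ for \emph{all} choices of $u(0),\ldots,u(t-n-1)$, so $C_k=0$ for $k\le t-n-1$. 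Therefore $D(w)=\sum_{i=1}^{n}C_{t-i}\,u(t-i)=\sum_{i=1}^{n}\sum_{l=1}^{m}(C_{t-i})_l\,u_l(t-i)$, and each $(C_{t-i})_l$ is a polynomial in $\mathbf{P}$ since it only involves $p(t-i),\ldots,p(t)$. Taking $E$ with its $Q_j$-coefficients equal to those of \eqref{sect:io_eq:eq2} (arguments reordered to the evaluation convention, which again keeps $Q_0\ne0$) and $L_{i,l}(\mathbf{P})=-(C_{t-i})_l$, we obtain an affine polynomial equation of order $n$ that $f$ satisfies.

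The main obstacle is the preliminary computation and the index bookkeeping around it: from the \GCR\ alone one must verify that removing the last $n$ scheduling steps of $w$ turns $f(w_{0:t-j})$ into the clean shift $f^{p_1\cdots p_{n+1-j}}(w')$ plus a \emph{polynomial} combination of exactly the inputs $u(t-1),\ldots,u(t-n)$ — no earlier input may appear, which is precisely what forces $C_k=0$ for $k\le t-n-1$ in the converse — and that these coefficients are genuine polynomials, not merely analytic functions; this is exactly why one works through the Markov parameters $S^f$ rather than with $f$ directly. Everything else is linear reorganisation. Throughout, the standing assumption $\mathcal{P}=\mathbb{R}^{\QNUM}$ (legitimate by Lemma \ref{lpv:io_eq:lemma-1}) is used so that the shifts $f^{\underline p}$ and the polynomial identities are taken over the full parameter space.
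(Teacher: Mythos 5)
Your proposal is correct and follows essentially the same route as the paper: the central step in both is the decomposition $Y_j=f^{p(t-n)\cdots p(t-j)}(w')+\sum_{i=j+1}^{n}R_{i,j}(\mathbf{P})u(t-i)$ obtained from the \GCR, after which the ``only if'' direction is the substitution $u(t-n)=\cdots=u(t)=0$ and the ``if'' direction is a regrouping of the residual input terms into the coefficients $L_{i,l}(\mathbf{P})$. Your extra bookkeeping (the vanishing of the coefficients $C_k$ for $k\le t-n-1$) is just a more explicit rendering of the paper's ``grouping together the terms $(Q_iR_{j,i})u(t-j)$'' and adds useful detail without changing the argument.
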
 
  Before formulating the next statement, recall
  the set of all maps
  $g:\HYBINP^{+} \rightarrow \mathbb{R}^{\OUTP}$
  forms a vector space with respect to
  point-wise addition and multiplication by scalar.
 \begin{Lemma}
 \label{lpv:io_seq:lemma4}
  The map $f$ satisfies \eqref{sect:io_eq:eq2} for some
  $Q_j$, $j=0,\ldots,n$ if and only if 
  $\mathcal{W}_{f}=\SPAN\{f^{\underline{p}} \mid \underline{p} \in (\mathbb{R}^{\QNUM})^{+} \}$ is finite dimensional.
 \end{Lemma}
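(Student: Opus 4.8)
The plan is to recast the statement in terms of the shift action of scheduling parameters on the function space, and then handle the two implications separately. First I would fix, for each $p\in\mathbb{R}^{\QNUM}$, the shift operator $\tau_p$ on the space $\mathcal{F}$ of all maps $\HYBINP^{+}\to\mathbb{R}$, given by $(\tau_p g)(w)=g(w(p,0))$. Since we may assume $\mathcal{P}=\mathbb{R}^{\QNUM}$, the \GCR\ formula \eqref{sect:io:def1:eq1} shows that for each fixed $w$ the map $\underline p\mapsto f^{\underline p}(w)$ is a polynomial, and in fact multilinear in the entries $p_1,\ldots,p_k$ of $\underline p$. Hence $\tau_p$ is linear in $p$, satisfies $\tau_p f^{\underline q}=f^{p\underline q}$, and $\mathcal{W}_f=\SPAN\{\,f^{e_{q_1}\cdots e_{q_k}}\mid k\ge1,\ q_i\in Q\,\}$; in particular each $V_k:=\SPAN\{\,f^{\underline p}\mid 1\le|\underline p|\le k\,\}$ is finite-dimensional, $V_1\subseteq V_2\subseteq\cdots$, and by the usual invariant-subspace argument (if $V_k=V_{k+1}$ then $\tau_p V_k\subseteq V_{k+1}=V_k$, so $V_j=V_k$ for all $j\ge k$) the chain stops growing exactly at $\mathcal{W}_f$, and does so by step $\dim\mathcal{W}_f$ if the latter is finite. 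I will also use a density principle: whenever a finite-dimensional $U\subseteq\mathcal{F}$ is involved, one can choose finitely many points $w_i$ on which the evaluation map is injective on $U$, and since $\underline p\mapsto f^{\underline p}(w_i)$ is polynomial, any identity among shifts (or membership $f^{\underline p}\in U$) that holds on a Zariski-dense set of $\underline p$'s holds for all $\underline p$.

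For the implication ``$f$ satisfies \eqref{sect:io_eq:eq2} with $Q_0\neq0$ $\Rightarrow\dim\mathcal{W}_f<\infty$'' I would prove $\mathcal{W}_f=V_n$ by showing, by induction on $m$, that $f^{\underline p}\in V_n$ for every $\underline p$ with $|\underline p|=m>n$. Applying the linear operator $\tau_{p_1}\cdots\tau_{p_{m-n-1}}$ to \eqref{sect:io_eq:eq2} with the equation's scheduling variables set to $p_{m-n},\ldots,p_m$, and using $\tau_p f^{\underline q}=f^{p\underline q}$, yields, for all $p_1,\ldots,p_m$,
\[
   Q_0(p_{m-n},\ldots,p_m)\,f^{p_1\cdots p_m}=-\sum_{j=1}^{n}Q_j(p_{m-n},\ldots,p_m)\,f^{p_1\cdots p_{m-j}}.
\]
Every term on the right has length $m-j<m$, hence lies in $V_n$ by definition or by the induction hypothesis; so $f^{p_1\cdots p_m}\in V_n$ wherever $Q_0(p_{m-n},\ldots,p_m)\neq0$, i.e.\ on a Zariski-dense set, and the density principle gives it for all $(p_1,\ldots,p_m)$. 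Thus $\mathcal{W}_f=V_n$ is finite-dimensional.

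For the converse let $d:=\dim\mathcal{W}_f<\infty$; the case $f\equiv0$ is trivial, so assume $d\ge1$ (hence $f^{p_1}\not\equiv0$). Choose $w_1,\ldots,w_d$ making $\mathrm{ev}\colon\mathcal{W}_f\to\mathbb{R}^d$, $g\mapsto(g(w_i))_i$, an isomorphism, and in indeterminates $(p_1,\ldots,p_{d+1})$ form the $d\times(d+1)$ polynomial matrix $\mathbf{M}$ with $(i,k)$ entry $f^{p_1\cdots p_k}(w_i)$; its columns $\mathbf{c}_1,\ldots,\mathbf{c}_{d+1}$ are the $\mathrm{ev}$-images of $f^{p_1},f^{p_1p_2},\ldots,f^{p_1\cdots p_{d+1}}$. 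These are $d+1$ vectors over the field $\mathbb{R}(p_1,\ldots,p_{d+1})$ in a $d$-dimensional space, hence dependent; let $k^{*}\in\{2,\ldots,d+1\}$ be the least index with $\mathbf{c}_{k^{*}}\in\SPAN_{\mathbb{R}(p)}\{\mathbf{c}_1,\ldots,\mathbf{c}_{k^{*}-1}\}$, so that $\mathbf{c}_1,\ldots,\mathbf{c}_{k^{*}-1}$ are independent. Pick $k^{*}-1$ rows on which $[\mathbf{c}_1\cdots\mathbf{c}_{k^{*}-1}]$ has a nonzero minor; applying the alternating-minors (Cramer) identity to the corresponding $(k^{*}-1)\times k^{*}$ submatrix of $[\mathbf{c}_1\cdots\mathbf{c}_{k^{*}}]$ produces polynomials $\alpha_1,\ldots,\alpha_{k^{*}}\in\mathbb{R}[p_1,\ldots,p_{k^{*}}]$, with $\alpha_{k^{*}}$ equal up to sign to that nonzero minor, and $\sum_{l=1}^{k^{*}}\alpha_l\mathbf{c}_l=0$ identically (the $\mathbb{R}(p)$-kernel of the submatrix equals that of $[\mathbf{c}_1\cdots\mathbf{c}_{k^{*}}]$ by the choice of $k^{*}$). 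Applying $\mathrm{ev}^{-1}$ gives $\sum_{l=1}^{k^{*}}\alpha_l\,f^{p_1\cdots p_l}=0$ for all $p$, which is exactly \eqref{sect:io_eq:eq2} with $n:=k^{*}-1$, $Q_j:=\alpha_{k^{*}-j}$ and leading coefficient $Q_0=\alpha_{k^{*}}\neq0$.

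The main obstacle is precisely this last point --- forcing the coefficient of the longest shift to be nonzero. A naive $(d+1)$-term Cramer relation among $f^{p_1},\ldots,f^{p_1\cdots p_{d+1}}$ may have zero coefficient there, because the nested-prefix chain $f^{p_1},f^{p_1p_2},\ldots$ need not span $\mathcal{W}_f$, not even generically in the scheduling parameters; passing to the \emph{first} linear dependence $k^{*}$ in that chain (and a maximal-rank submatrix) is what makes the coefficient of $f^{p_1\cdots p_{k^{*}}}$ a genuinely nonzero polynomial. This is also why the order $n$ of the input-output equation must be allowed to be any integer $\le\dim\mathcal{W}_f$ rather than fixed a priori.
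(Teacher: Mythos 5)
Your proof is correct and follows essentially the same route as the paper's: in one direction you shift the input-output equation, divide by $Q_0$ on a dense set of scheduling sequences and extend by density/polynomiality to get $\mathcal{W}_f=V_n$; in the other you form a $d\times(d+1)$ polynomial matrix from the prefix chain $f^{p_1},\ldots,f^{p_1\cdots p_{d+1}}$, extract a column dependence over $\mathbb{R}(p_1,\ldots,p_{d+1})$ and clear denominators. Your explicit first-dependence/Cramer argument guaranteeing the leading coefficient $Q_0\neq 0$, and your use of Zariski density in place of the paper's pointwise-limit argument, are refinements of detail rather than a genuinely different method.
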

 \begin{Lemma}
 \label{lpv:io_seq:lemma5}
  The input-output map $f$ has a realization by a \BLPV
  if and only if 
  $\mathcal{W}_f=\SPAN\{f^{\underline{p}} \mid \underline{p} \in (\mathbb{R}^{\QNUM})^{+} \}$ is finite dimensional.
 \end{Lemma}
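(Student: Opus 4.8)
The plan is to deduce the lemma from Theorem~\ref{sect:real:theo2}. Since $f$ is assumed to admit a \GCR, that theorem says $f$ has an \BLPV realization if and only if $\Rank H_f<+\infty$, so it suffices to show that $\dim\mathcal{W}_f$ is finite exactly when $\Rank H_f$ is finite; in fact I would prove the sharper statement $\dim\mathcal{W}_f=\Rank H_f$. First, I would normalize $\mathcal{W}_f$: using the \GCR\ \eqref{sect:io:def1:eq1} of $f$, for $\underline{p}=p_1\cdots p_k\in(\mathbb{R}^{\QNUM})^{+}$ the inputs fed after $w$ in $f^{\underline{p}}(w)=f(w(p_1,0)\cdots(p_k,0))$ vanish, so $f^{\underline{p}}(w)$ is a sum of terms, one for each input time $\le t$, each of which is a product in which every $p_i$ appears to the first power; hence $f^{\underline{p}}$ is multilinear in $(p_1,\ldots,p_k)$. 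Expanding along the standard basis then gives $f^{\underline{p}}\in\SPAN\{f^{v}\mid v\in Q^{k}\}$, where $f^{v}:=f^{e_{q_1}\cdots e_{q_k}}$ for $v=q_1\cdots q_k$, and conversely each $f^{v}$ is of the form $f^{\underline{p}}$; thus $\mathcal{W}_f=\SPAN\{f^{v}\mid v\in Q^{+}\}$, and the task reduces to $\dim\SPAN\{f^{v}\mid v\in Q^{+}\}=\Rank H_f$.

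Next I would set up a linear isomorphism between this span and a span of \emph{formal rows} of Markov-parameter data. Expanding \eqref{sect:io:def1:eq1} for $f^{v}$, $v\in Q^{+}$, yields, for $w$ of the form \eqref{inp_seq},
\[
 f^{v}(w)=\sum_{k=0}^{t}\Big(\sum_{s\in Q^{+},\ |s|=t-k+1}S^f(sv)\,\underline{p}_{k:t}^{s}\Big)u(k),
\]
so $f^{v}$ is determined by the family $\phi_{v}:=(S^f(sv))_{s\in Q^{+}}$, regarded as a formal row vector indexed by $Q^{+}\times\{1,\ldots,m\}$ (recall $\OUTP=1$, so $S^f(sv)$ is $1\times m$). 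Conversely, evaluating $f^{v}$ at $w=(e_{q_0},u)(e_{q_1},0)\cdots(e_{q_t},0)$ turns each monomial $\underline{p}^{s}$ into a Kronecker delta and gives $f^{v}\big((e_{q_0},u)(e_{q_1},0)\cdots(e_{q_t},0)\big)=S^f(q_0\cdots q_t\,v)\,u$, so $\phi_{v}$ is recovered from $f^{v}$. Hence $f^{v}\leftrightarrow\phi_{v}$ preserves all linear relations in both directions and extends to a linear isomorphism $\SPAN\{f^{v}\mid v\in Q^{+}\}\to\SPAN\{\phi_{v}\mid v\in Q^{+}\}$.

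To finish, I would identify $\SPAN\{\phi_{v}\mid v\in Q^{+}\}$ with the row space of $H_f$. Unfolding the $\QNUM\times\QNUM$ block structure of the Markov-parameters $M^f(\cdot)$ and the lexicographic enumeration $v_1\prec v_2\prec\cdots$, the row of $H_f$ in block-row $i$ and sub-row $a\in Q$ carries, in block-column $j$ and sub-column $b\in Q$, the $1\times m$ entry $S^f(b\,v_j\,v_i\,a)=S^f\big((bv_j)(v_i a)\big)$. Since $(i,a)\mapsto v_i a$ and $(j,b)\mapsto b v_j$ are bijections of $Q^{*}\times Q$ onto $Q^{+}$, the rows of $H_f$ are precisely the $\phi_{w}$, $w\in Q^{+}$, so $\Rank H_f=\dim\SPAN\{\phi_{w}\mid w\in Q^{+}\}=\dim\mathcal{W}_f$; together with Theorem~\ref{sect:real:theo2} this proves the claim. (The implication that realizability forces $\dim\mathcal{W}_f<\infty$ also follows directly: if $\Sigma$ of the form \eqref{lin_switch0} realizes $f$, then $f^{\underline{p}}(w)=\big(\sum_{q}(p_k)_qC_q\big)\big(\sum_{q}(p_{k-1})_qA_q\big)\cdots\big(\sum_{q}(p_1)_qA_q\big)x_{\Sigma,0}(w)$, a linear functional of $x_{\Sigma,0}(w)\in\mathbb{R}^{n}$, whence $\dim\mathcal{W}_f\le n$.)

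I expect the main obstacle to be the index bookkeeping in this last step: one has to check carefully that the block rows and columns of $H_f$, arranged through the $\QNUM\times\QNUM$ pattern of the $M^f(v)$ and the ordering $\prec$, correspond to appending, respectively prepending, a single letter of $Q$, so that each $\phi_{w}$, $w\in Q^{+}$, appears as a row of $H_f$ exactly once. The remaining steps are routine rearrangements of \eqref{sect:io:def1:eq1}.
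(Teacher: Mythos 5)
Your proof is correct and follows essentially the same route as the paper's: both establish a linear isomorphism between $\mathcal{W}_f$ and the span of the rows of $H_f$ (your $f^{v}\leftrightarrow\phi_{v}$ correspondence is the paper's map $\Phi$ restricted to the generators $f^{e_{q_1}}\cdots{}^{e_{q_k}}$) and then conclude via Theorem \ref{sect:real:theo2}. The only blemish is an off-by-one in your parenthetical direct argument (the factor string should act on the state obtained after processing all of $w$, i.e.\ $x(t+1)$, not on $x_{\Sigma,0}(w)=x(t)$), but that aside is redundant and the bound $\dim\mathcal{W}_f\le n$ still holds.
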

 The proof of Lemma \ref{lpv:io_seq:lemma5} boils down to showing 
 that there is a linear isomorphism between $\mathcal{W}_f$
 and 
 the linear space spanned by the rows of the Hankel-matrix $H_f$ of $f$.
 Hence, $\mathcal{W}_f$ is finite dimensional if and only if
 $\Rank H_f < +\infty$.
 By Theorem \ref{sect:real:theo2}, the latter is equivalent to 
 the existence of an \BLPV realization of $f$.
 Theorem \ref{lpv:io_eq:theo1} follows from the
 lemmas above as follows. 
 From Lemma \ref{lpv:io_seq:lemma5}, 
 $f$ has a realization by a 
 \BLPV if and only if 
 $\mathcal{W}_f$ is finite dimensional. By Lemma 
 \ref{lpv:io_seq:lemma4} and Lemma \ref{lpv:io_eq:lemma1},
 the latter is equivalent to existence of an affine
 polynomial equation of the form
 \eqref{sect:io_eq:eq1} such that $f$ satisfies $E$.

\section{Conclusion}
We have presented realization theory for the class of affine 
LPV systems.
In addition, we have shown that realization theory of this class of
LPV systems is equivalent to that of for linear switched systems.
We have also presented an equivalent input-output representation for
affine LPV systems.



\appendix
\section{Technical proofs}
\label{app:proofs}
 \begin{proof}[Proof of Lemma \ref{lpv:io_eq:lemma-1}]
  Note that $E(f_{ext},w)$ is a polynomial
  expression in 
  $p(0),u(0)), \ldots, (p(t),u(t)) \in \HYBINP_{ext}$ for each $w$ of the form \eqref{inp_seq}.
 Since $E(f_{ext},w)=0$ for all $w \in \HYBINP^{+}$
 and $\HYBINP$ is an open subset of $\HYBINP_{ext}$, it then
 follows that $E(f_{ext},w)=0$ for all $w \in \HYBINP_{ext}$.
 \end{proof}
  \begin{proof}[Proof of Lemma \ref{lpv:io_eq:lemma1}]
   By substitution it is clear that if \eqref{sect:io_eq:eq1} holds,
   then \eqref{sect:io_eq:eq2} holds. 
   Conversely assume that \eqref{sect:io_eq:eq2} holds.
   With the notation of \eqref{sect:io_eq:eq1}, notice that
   for $Y_{i}=f((p(0),u(0))\cdots (p(t-i),u(t-i))$, $i=0,\ldots,n$,
   \[ 
     \begin{split}
     & Y_i=f^{p(t-n)p(t-n+1) \cdots, p(t-i)}(v)+\\
     & + \sum_{j=i+1}^{n}R_{j,i}(p(t-j),\ldots,p(t-i))u(t-j),
    \end{split}
   \] 
    where $v=(p(0),u(0))\cdots (p(t-n-1),u(t-n-1))$,
    and $R_{j,i}$, $j=n,\ldots,i+1$ are suitable polynomials.
   Consider the expression $\sum_{i=0}^{n} Q_i(p(t-n),\ldots,p(t))Y_i$.
   By grouping together the terms $(Q_iR_{j,i})(p(t-n),\ldots,p(t))u(t-j)$
   in a suitable way, we can obtain polynomials $L_{j,l}$, $j=1,\ldots,n$
  and $l=1,\ldots,m$ such that
  then \eqref{sect:io_eq:eq1} holds.
  \end{proof}
 \begin{proof}[Proof of Lemma \ref{lpv:io_seq:lemma4}]
Assume that $f$ satisfies \eqref{sect:io_eq:eq2}. 
  It then follows from 
  $f^{p_1,\ldots, p_{r+i}}(w)=f^{p_{k+1},\ldots,p_{r+i}}(w(p_1,0)\cdots(p_r,0))$,
  $i=1,\ldots,n+1$,
  $w \in \HYBINP^{+}$  that
   for any $p_1,\ldots, p_{r+n+1} \in \mathcal{P}$, $r \ge 0$,
  \begin{equation}
  \label{sect:io_eq:eq22}
  \sum_{j=0}^{n} f^{p_1,\ldots,p_{n-j+r+1}}Q_j(p_{r+1},\ldots,p_{r+n+1})=0.
  \end{equation}
  Define $W_{f,k}=\mathrm{span}\{f^{p_1\cdots p_k} \mid p_1,\ldots,p_k \in \mathcal{P}\}$.
  Note that as $\mathcal{P}$ contains an open set, there exists a finite
  subset $E$ of $\mathcal{P}$ such that the elements of $E$ span $\mathbb{R}^{\QNUM}$.
  Since $f^{p_1,\ldots,p_k}$ is multi-linear in $p_1,\ldots,p_k$,
  $f^{p_1,\ldots,p_k}$ is a linear combination of 
  $f^{v_1,\ldots,v_k}$, $(v_1,\ldots,v_k) \in E^k$. Since $E^k$ is a finite set.
  it then follows that $W_{f,k}$ is finite dimensional.

Since
  $Q_{0} \ne 0$, there exists an open and dense subset
  $Z \subseteq \mathcal{P}^{r+n+1}$ such that for any
  $(p_1,\ldots,p_{n+r+1}) \in Z$, $Q_{0}(p_r,\ldots,p_{n+r+1}) \ne 0$,
  $r=1,\ldots,k$.
  By dividing \eqref{sect:io_eq:eq22} with $Q_{0}(p_r,\ldots,p_{n+r+1}) \ne 0$,
  and using  induction on $r$,
  it then follows that $f^{p_1 \cdots p_{n+r+1}} \in W_{f,n}$.
  Since $Z$ is dense, it then follows that for any $\underline{p}=p_1\cdots p_{r+n+1} \in \mathcal{P}^{+}$,
   there exist $\underline{p}_{j}=p_1^j\cdots p_{r+n+1}^j$, $p_1^j,\ldots,p_{r+n+1}^j \in Z$, $j \in \mathbb{N}$, such that
   $\lim_{j \rightarrow \infty} p^j_i = p_i$, $i=1,\ldots,r+n+1$.
   As $f^{\underline{p}}$ is polynomial (multi-linear) in the entries of
 $p_1,\ldots,p_{n+r+1}$, it then follows
 $\lim_{j \rightarrow \infty} f^{\underline{p}_j}(w)=f^{\underline{p}}(w)$ for all $w \in \HYBINP^{+}$, i.e.
 $f^{\underline{p}_j} \in W_{f,n}$ converges to $f^{\underline{p}}$ point-wise. Since
 $W_{f,n}$ is finite dimensional vector space, it then follows that
 $f^{\underline{p}} \in W_{f,n}$.

 Indeed, let $g_1,\ldots,g_K$ be a basis of $W_{f,n}$. 
 Then there exist $w_1,\ldots,w_K \in \HYBINP^{+}$ and
  such that the matrix
  $S=(S_{r,l})_{r,l=1,\ldots,K}$, $S_{r,l}=g_{l}(w_r) \in \mathbb{R}$, is invertible.
  Define $v_j=(f^{\underline{p}_j}(w_1), \ldots, f^{\underline{p}_j}(w_K))^T$,
  and 
  $v=(f^{\underline{p}}(w_1), \ldots, f^{\underline{p}}(w_K))^T$,
  It then follows that $f^{\underline{p}_j}=\sum_{k=1}^{K} \alpha_k^j g_k$,
  where $\alpha^j=(\alpha_1^j,\ldots,\alpha_K^j)^T$ satisfies $\alpha_j=S^{-1}v_j$.
  We claim that if $(\alpha_1,\ldots,\alpha_K)^T=\alpha = S^{-1}v$, then
  $f^{\underline{p}}=\sum_{k=1}^{K} \alpha_k  g_k$.
  Assume the contrary. Then for some $w \in \HYBINP^{+}$,
  $f^{\underline{p}}(w) \ne \sum_{k=1}^{K} \alpha_k g_k(w)$.
   Notice that $\lim_{j \rightarrow \infty} v_j =v$ and hence
   $\alpha = S^{-1}v = \lim_{j \rightarrow \infty} S^{-1}v_j=\lim_{j \rightarrow \infty} \alpha^j$.
  Hence, $\sum_{k=1}^{K} \alpha_k g_k(w)=\lim_{j \rightarrow \infty}\sum_{k=1}^{K} \alpha_k^j g_k(w)=\lim_{j \rightarrow \infty} f^{\underline{p}_j}(w)=f^{\underline{p}}(w)$, which is a contradiction.

 Hence, $f^{\underline{p}} \in W_{f,n}$, for all $\underline{p} \in \mathcal{P}^{+}$,
 $|\underline{p}|=n+r+1$,  and thus $W_{f,n}=W_f$, i.e. $W_f$ is finite dimensional.

 Conversely, assume that $\mathcal{W}_f$ is finite dimensional.
 For each $v=q_1\cdots q_k \in Q^{+}$, $q_1,\ldots,q_k \in Q$, denote by $f^{v}$ the map $f^{e_{q_1}\cdots,e_{q_k}}$.
 As it was noted above,
 $f^{p_1 \cdots p_k}$ is multilinear in $p_1,\ldots,p_k$ and
 hence $\mathcal{W}_f$
 equals the linear span of 
 $f^{z_1},\ldots, f^{z_d}$ for some
 $z_1,\ldots,z_d \in Q^{+}$.
 Notice that for any $\underline{p} \in (\mathbb{R}^{\QNUM})^{+}$,
 $|\underline{p}|=k$,
 $f^{\underline{p}}=\sum_{v \in Q^{+}, |v|=k} f^{v} \underline{p}^{v}$.
  Since for every $v \in Q^{+}$, $f^v$ is a linear combination of
  $f^{z_i}$, $i=1,\ldots,d$, there exist
  polynomials $P_{i,k}$ is $k\QNUM$ variables, such that
  $f^{p_1\cdots p_k}=\sum_{j=1}^{d} P_{j,k}(p_1,\ldots,p_k)f^{z_j}$ for any $p_1,\ldots,p_k \in \mathbb{R}^{\QNUM}$.

  Consider now the $d \times (d+1)$ polynomial matrix 
  $\mathbf{D}_{d+1}$ in variables 
  $X_i=(X_{i,1},\ldots,X_{i,\QNUM})$, $i=1,2,\ldots,d+1$
  such that
  $(i,j)$the entry of $\mathbf{D}_{d+1}$ equals
  $P_{i,j}(X_1,\ldots,X_j)$, 
  Let's view $\mathbf{D}_{d+1}$
  as a matrix with elements in
  $\mathbb{R}(X_1,\ldots,X_{d+1})$. 
  Here, $\mathbb{R}(X_1,\ldots,X_{d+1})$ is the quotient field of the
  polynomial ring $\mathbb{R}[X_1,\ldots,X_{d+1}]$. 
  Since $\mathbf{D}_{d+1}$ has only $d$ rows and $d+1$ columns,
  the columns of $\mathbf{D}_{d+1}$ must be linearly dependent.
  It then follows that
  there exist polynomials $D_j,N_j \in \mathbb{R}[X_1,\ldots,X_{d+1}]$,
  $N_j \ne 0$, $j=1,\ldots,k^{*}$,
  such that $D_{k*} \ne 0$ and
  \( \sum_{j=1}^{k^{*}} P_{i,j}\frac{D_j}{N_j}=0 \).
  By multiplying the equation above by the product of $N_1\cdots N_{k^{*}}$ we get that
  \begin{equation} 
  \label{lpv:io_seq:lemma4:eq2}
    \forall i = 1,\ldots, d: 
    \sum_{j=1}^{k^{*}} P_{i,j}R_{j} = 0 
  \end{equation}
  for some polynomial $R_1,\ldots R_{k^{*}}$, $R_{k^{*}} \ne 0$.
  Notice that the polynomial $P_{i,j}$ depend only on the variables
  $X_1,\ldots,X_j$, hence $R_1,\ldots, R_{k^{*}}$ can be
  chosen to be polynomials only in $X_1,\ldots, X_{k^{*}}$.
  If $k^{*}=1$, then $P_{i,1}=0$ and hence $f^{p}=0$ for all
  $p \in \mathcal{P}$. Hence, $f^{p_1,\ldots,p_k}(w)=f^{p_k}(w(p_1,0)\cdots (p_{k-1},0))=0$ for 
  for all $w \in \HYBINP^{+}$, $p_1,\ldots,p_k$, $k > 0$. Then \eqref{sect:io_eq:eq2} holds
  for $n=1$ with any choice of $Q_1$ and $Q_0$.
  If $k^{*} > 1$, then set $n=k^{*}$,
  $Q_{i}=R_{k^{*}-i}$, $i=1,\ldots k^{*}-1$.
   Using the fact that $f^{p_1\cdots p_i}=\sum_{j=1}^{d} P_{j,i}(p_1,\ldots,p_i)f^{z_j}$ and \eqref{lpv:io_seq:lemma4:eq2}, 
   it then follows that
  \eqref{sect:io_eq:eq2} holds for all $p_1,\ldots,p_k \in \mathbb{R}^{\QNUM}$.
 \end{proof}
 \begin{proof}[Proof of Lemma \ref{lpv:io_seq:lemma5}]
  Denote by $\mathcal{H}$ the linear span of the rows of
  the Hankel-matrix $H_f$. Notice that each element of
  $\mathcal{H}$ can be viewed as a sequence of
  $1 \times \QNUM m$ matrices.
  We define the linear map
  $\Phi: \mathcal{W}_f \rightarrow \mathcal{H}$ as follows:
  $\Phi(f^{\underline{p}})=(H_{v_1},H_{v_2},\ldots)$, such that
  for each $v \in Q^{*}$,
  \[
     H_{v}=\sum_{s \in Q^{+}, |s|=|\underline{p}|-1} 
         \begin{bmatrix} \underline{p}^{s1} &,\ldots,\underline{p}^{s\QNUM} \end{bmatrix} M^f(vs).
  \]
  In other words, $H_v=\begin{bmatrix} H_{v,1} &,\ldots, & H_{v,\QNUM} \end{bmatrix}$, where
  $H_{v,q}=\sum_{s \in Q^{+}, |s|=|\underline{p}|} S^f(qvs)\underline{p}^{s}$.
  Moreover, for any $w$ of form \eqref{inp_seq}, 
  \[ f^{\underline{p}}(w)=\sum_{k=0}^{t-1}H_{q_{k+1}\cdots q_t,q_k}u(k)p_{q_1}(k)\cdots p_{q_t}(t).
  \]
  Hence, it is clear that $\Phi$ is an injective linear map.
  Moreover, the row of $H_f$ indexed by the integer
  $l=(i-1)\QNUM+q$, $q \in Q$, $i=1,\ldots$ equals
  $\Phi(f^{e_q})$ if $i=1$, or
  $\Phi(f^{e_{q_1}\cdots e_{q_k},e_q})$, if $i > 1$
   and $q_1,\ldots,q_k \in Q$ are such that
   $v_i=q_1\cdots q_k$, where $v_i$ is $i$th sequence of the 
   lexicographic ordering \eqref{rem:lex:eq1}.
  Hence, $\Phi$ is a linear isomorphism from $\mathcal{W}_f$ to
  the space spanned by the rows of $H_f$. The rest of
  follows from Theorem \ref{sect:real:theo2}.
 \end{proof}

\end{document}